\newcommand*{\mailto}[1]{\href{mailto:#1}{\nolinkurl{#1}}}
\newcommand{\arxiv}[1]{\href{http://arxiv.org/abs/#1}{arXiv:#1}}
\newcommand{\bbC}{{\mathbb{C}}}
\newcommand{\bbN}{{\mathbb{N}}}
\newcommand{\bbR}{{\mathbb{R}}}
\newcommand{\cB}{{\mathcal B}}
\newcommand{\cD}{{\mathcal D}}
\newcommand{\cH}{{\mathcal H}}
\newcommand{\cX}{{\mathcal X}}
\newcommand{\beq}{\begin{equation}}
\newcommand{\enq}{\end{equation}}
\renewcommand{\a}{\alpha}
\renewcommand{\b}{\beta}
\newcommand{\g}{\gamma}
\newcommand{\z}{\zeta}
\DeclareMathOperator{\supp}{supp}
\DeclareMathOperator{\dom}{dom}
\renewcommand{\Re}{\text{\rm Re}}
\renewcommand{\Im}{\text{\rm Im}}
\renewcommand{\ln}{\text{\rm ln}}
\newcommand{\no}{\notag}
\newcommand{\lb}{\label}
\newcommand{\f}{\frac}
\newcommand{\ol}{\overline}
\newcommand{\wti}{\widetilde}
\newcommand{\Oh}{O}
\newcommand{\hatt}{\widehat} 
\newcommand{\dott}{\,\cdot\,}
\newcommand{\bi}{\bibitem}
\renewcommand{\ge}{\geqslant}
\numberwithin{equation}{section}
\newtheorem{theorem}{Theorem}[section]
\newtheorem{lemma}[theorem]{Lemma}
\newtheorem{corollary}[theorem]{Corollary}
\newtheorem{example}[theorem]{Example}
\theoremstyle{remark}
\newtheorem{remark}[theorem]{Remark}
\begin{document}
\title[A Survey of Some Norm Inequalities]{A Survey of Some Norm Inequalities} 

\author[F.\ Gesztesy]{Fritz Gesztesy}
\address{Department of Mathematics, 
Baylor University, Sid Richardson Bldg., 1410 S.\,4th Street, Waco, TX 76706, USA}
\email{\mailto{Fritz\_Gesztesy@baylor.edu}}
%\email{Fritz$\_$Gesztesy@baylor.edu}
\urladdr{\url{http://www.baylor.edu/math/index.php?id=935340}}
%\urladdr{http://www.baylor.edu/math/index.php?id=935340}

\author[R.\ Nichols]{Roger Nichols}
\address{Department of Mathematics (Dept.~6956), The University of Tennessee at Chattanooga, 
615 McCallie Ave, Chattanooga, TN 37403, USA}
\email{\mailto{Roger-Nichols@utc.edu}}
%\email{Roger-Nichols@utc.edu}
\urladdr{\url{http://www.utc.edu/faculty/roger-nichols/index.php}}
%\urladdr{http://www.utc.edu/faculty/roger-nichols/index.php}

\author[J.\ Stanfill]{Jonathan Stanfill}
\address{Department of Mathematics, 
Baylor University, Sid Richardson Bldg., 1410 S.\,4th Street, Waco, TX 76706, USA}
\email{\mailto{Jonathan\_Stanfill@baylor.edu}}
%\email{Jonathan$\_$Stanfill@baylor.edu}
\urladdr{\url{http://sites.baylor.edu/jonathan-stanfill/}}
%\urladdr{http://sites.baylor.edu/jonathan-stanfill/}

\dedicatory{Dedicated with great pleasure to Henk de Snoo on the happy occasion of his 75th birthday}

\date{\today}

\thanks{Originally appeared in {\it Complex Analysis and Operator Theory} {\bf 15}, No.~23 (2021); the present version contains some updates.}
\@namedef{subjclassname@2020}{\textup{2020} Mathematics Subject Classification}
\subjclass[2020]{Primary: 47A30, 34L40; Secondary: 47B25, 47B44.}
%\subjclass[2010]{Primary: ; Secondary: .}
\keywords{Hardy--Littlewood, Kallman--Rota, and Landau--Kolmogorov inequalities.}

%%%%%%%%%%%%%%%
\begin{abstract}
We survey some classical norm inequalities of Hardy, Kallman, Kato, Kolmogorov, Landau, Littlewood, and Rota of the type 
\[
\|A f\|_{\cX}^2 \leq C \|f\|_{\cX} \big\|A^2 f\big\|_{\cX}, \quad f \in \dom\big(A^2\big), 
\]
and recall that under exceedingly stronger hypotheses on the operator $A$ and/or the Banach space $\cX$, the optimal constant $C$ in these inequalities diminishes from $4$ (e.g., when $A$ is the generator of a $C_0$ contraction semigroup on a Banach space $\cX$) all the way down to $1$ (e.g., when $A$ is a symmetric operator on a Hilbert space $\cH$).  

We also survey some results in connection with an extension of the Hardy--Littlewood inequality involving quadratic forms as initiated by Everitt. 
\end{abstract}
%%%%%%%%%%%%%%%

\maketitle 

%%%%%%%%%%%%%%%%%%%%%%%%%%%%%%%
%%%%%%%%%%%%%%%%%%%%%%%%%%%%%%%
\section{Introduction} \lb{s1} 
%%%%%%%%%%%%%%%%%%%%%%%%%%%%%%%
%%%%%%%%%%%%%%%%%%%%%%%%%%%%%%%

{\it We dedicate this note with great pleasure to Henk de Snoo, whose exemplary scholarship over the years  deserves our undivided respect and admiration. Happy Birthday, Henk, we hope our modest contribution to norm inequalities will give some joy.} 

This is a survey of a number of classical norm inequalities due to Hardy, Kallman, Kato, Kolmogorov, Landau, Littlewood, and Rota of the type 
\begin{equation} 
\|A f\|_{\cX}^2 \leq C \|f\|_{\cX} \big\|A^2 f\big\|_{\cX}, \quad f \in \dom\big(A^2\big),    \lb{1.1}
\end{equation} 
and some of their higher-order extensions. In particular, we recall that under exceedingly stronger hypotheses on the operator $A$ and/or the Banach space $\cX$, such as, \\[1mm] 
$(i)$ \;\,\,$A$ is the generator of a $C_0$ contraction semigroup on a Banach space $\cX$, \\[1mm]  
$(ii)$\;\,\,$A$ is the generator of a $C_0$ contraction semigroup on a Hilbert space $\cH$, \\[1mm]  
$(iii)$\;$A$ is the generator of a $C_0$ group of isometries on a Banach space $\cX$, \\[1mm] 
$(iv)$ $A = c S^*$, where $c \in \bbC$ and $S$ is maximally symmetric in a Hilbert space but 
\hspace*{5.5mm}  not self-adjoint, \\[1mm] 
$(v)$\;\,\,$A$ is a symmetric operator on a Hilbert space $\cH$, \\[1mm]  
the optimal constant $C$ in these inequalities diminishes from $4$ to $1$ in the process.

Historically, this type of investigations started with Landau \cite{La13} in 1913 who proved 
\begin{equation}
\|f'\|_{L^{\infty}((0,\infty);dx)}^2 \leq 4 \, \|f\|_{L^{\infty}((0,\infty);dx)} \|f''\|_{L^{\infty}((0,\infty);dx)},  \quad 
f\in W^{2,\infty}((0,\infty))     \lb{1.2} 
\end{equation}
(the constant $4$ being optimal), followed by Hardy and Littlewood \cite{HL32} who derived the $L^2$-analog of \eqref{1.2} in 1932, 
\begin{align}
\|f'\|_{L^2((0,\infty);dx)}^2 \leq 2 \, \|f\|_{L^2((0,\infty);dx)} \|f''\|_{L^2((0,\infty);dx)},  \quad 
f \in W^{2,2}((0,\infty))    \lb{1.3} 
\end{align}
(again, with best possible constant $2$). These authors also proved the analogs of inequalities \eqref{1.2} and \eqref{1.3} on the whole line $\bbR$, with (optimal) constants $2$ (see also Hadamard \cite{Ha14} in this context) and $1$, respectively, followed by fundamental work of Kolmogorov \cite{Ko39} in 1939. These early investigations led to extensive subsequent work in this area as will be shown in the bulk of this survey. 

Inequalities of the type \eqref{1.2}, \eqref{1.3} were abstracted in the form \eqref{1.1} with $C=4$ by Kallman and Rota \cite{KR70} in the context where $A$ is the generator of a $C_0$ contraction semigroup on a Banach space $\cX$. That the constant can be diminished from $4$ to $2$ in the Hilbert space context was shown by Kato \cite{Ka71} in 1971. Again, this marked the beginning of numerous subsequent works, especially in connection with higher-order analogs of the estimate \eqref{1.1}. 

In Section \ref{s2} we survey the case of norm inequalities for generators of $C_0$ semigroups in Banach and Hilbert spaces. The case where $A$ generates a $C_0$ group in Banach and Hilbert spaces is recalled in Section \ref{s3}. Some inequalities for fractional powers of generators of contraction semigroups are surveyed in Section \ref{s4}. Extensions of the Hardy--Littlewood inequality in Hilbert spaces are recalled in Section \ref{s5}. In particular, we discuss an extension initiated by Everitt \cite{Ev71} in 1971 involving quadratic forms of general Sturm--Liouville operators and then add some considerations naturally involving the Friedrichs extension $A_F$ of a symmetric operator $A$ bounded from below. The explicitly solvable example associated with the differential expression
\begin{align}
\begin{split}
\tau_{\a,\b,\g} = x^{-\a}\left[-\frac{d}{dx}x^\b\frac{d}{dx} +\frac{(2+\a-\b)^2\g^2-(1-\b)^2}{4}x^{\b-2}\right],\\
\a>-1,\ \b<1,\ \g \in (0,1),\ x\in(0,\infty),     \lb{1.4} 
\end{split}
\end{align}
is analyzed in some detail in our final Section \ref{s6}.

For other surveys of many aspects of integral inequalities we refer, for instance, to \cite{BBBBDEEKL98}, 
\cite{EE82}, \cite{EEHJ98}, \cite{Ev75}, \cite{KZ80}, \cite{KZ81}, \cite{KZ91}, and \cite{KZ92}.

Finally, some comments regarding our notation: All Hilbert spaces $\cH$ are assumed to be complex in this survey and a symmetric operator $A$ in $\cH$ is always assumed to be densely defined 
(such that $A \subseteq A^*$).

%%%%%%%%%%%%%%%%%%%%%%%%%%%%%%%
%%%%%%%%%%%%%%%%%%%%%%%%%%%%%%%
\section{Norm Inequalities for Generators of $C_0$ Semigroups} \lb{s2} 
%%%%%%%%%%%%%%%%%%%%%%%%%%%%%%%
%%%%%%%%%%%%%%%%%%%%%%%%%%%%%%%

We begin by considering inequalities concerning (infinitesimal) generators $G$ of $C_0$ semigroups of bounded operators in the form $T(t)=e^{t G}$, $t\in[0,\infty)$, on a Banach space $\cX$, in particular, $\{T(t)\}_{t \in [0,\infty)} \subset \cB(\cX)$ satisfies \\[1mm]
$(i)$ $T(0) = I_{\cX}$; \\[1mm]
$(ii)$ $T(s) T(t) = T(s+t)$, $s, t \in [0,\infty)$; \\[1mm]
$(iii)$ $[0,\infty) \ni t \mapsto T(t)f \in \cX$ is continuous for each $f \in \cX$ (w.r.t. the topology on $\cX$, i.e., 
$T(\dott)f \in C([0,\infty), \cX)$). 

We will especially be interested in the case of $C_0$ contraction semigroups, that is, those satisfying 
$\|T(t)\|_{\cB(\cX)} \leq 1$, $t \in [0,\infty)$. 

Given a $C_0$ semigroup $T(t)$, $t \in [0,\infty)$, its generator $G$ is then defined as usual via
\begin{equation}
Gf = \lim_{t \downarrow 0} t^{-1} [T(t) f - f],    \lb{2.1} 
\end{equation}
with $\dom(G) \subseteq \cX$ consisting precisely of those $f \in \cX$ for which the limit in \eqref{2.1} exists in the norm $\| \dott \|_{\cX}$ of $\cX$. 

%%%%%%%%%%%%%%%
\begin{theorem}\lb{t2.1}  $($The Kallman--Rota inequality \cite{KR70}, see also \cite[Theorem 9.8]{Go85}$)$.
Let $c \in \bbC\backslash\{0\}$ and suppose that $cA$ generates a $C_0$ contraction semigroup $T(t)$,  $t \in [0,\infty)$, on a Banach space $\cX$. Then  
\begin{equation}
\|Af\|_{\cX}^2\leq 4 \, \|f\|_{\cX} \big\|A^2f\big\|_{\cX}, \quad f\in\dom\big(A^2\big).     \lb{2.2} 
\end{equation}
\end{theorem}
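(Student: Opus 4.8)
The plan is to reduce immediately to the case $c = 1$ and then read off the inequality from the second-order Taylor expansion of the semigroup orbit $t \mapsto T(t) f$. Since $c \in \bbC\backslash\{0\}$, the operator $B := cA$ generates the given $C_0$ contraction semigroup, and $\dom\big(A^2\big) = \dom\big(B^2\big)$ with $Af = c^{-1} B f$ and $A^2 f = c^{-2} B^2 f$. Hence $\|Af\|_{\cX}^2 = |c|^{-2} \|Bf\|_{\cX}^2$ and $\|f\|_{\cX} \big\|A^2 f\big\|_{\cX} = |c|^{-2} \|f\|_{\cX} \big\|B^2 f\big\|_{\cX}$, so \eqref{2.2} for $A$ is equivalent to the same inequality for $B$; thus we may assume that $A$ itself generates the $C_0$ contraction semigroup $T(t)$, $t \in [0,\infty)$.

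The analytic core of the argument is as follows. For $f \in \dom\big(A^2\big)$ one has $Af \in \dom(A)$, so by standard semigroup theory the map $[0,\infty) \ni t \mapsto T(t) f \in \cX$ is twice continuously differentiable (up to the endpoint $t = 0$) with $(d/dt) T(t) f = T(t) Af$ and $(d^2/dt^2) T(t) f = T(t) A^2 f$. Taylor's formula with integral remainder for $C^2$ Banach-space-valued maps then yields, for every $t > 0$,
\[
t\, Af = T(t) f - f - \int_0^t (t - s)\, T(s) A^2 f\, ds .
\]
Taking $\cX$-norms and invoking the contraction bound $\|T(s)\|_{\cB(\cX)} \leq 1$, $s \geq 0$, gives
\[
t\, \|Af\|_{\cX} \leq 2 \|f\|_{\cX} + \big\|A^2 f\big\|_{\cX} \int_0^t (t - s)\, ds = 2 \|f\|_{\cX} + \frac{t^2}{2} \big\|A^2 f\big\|_{\cX} ,
\]
that is, $\|Af\|_{\cX} \leq 2 t^{-1} \|f\|_{\cX} + 2^{-1} t \big\|A^2 f\big\|_{\cX}$ for all $t \in (0,\infty)$.

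It then remains to optimize in $t$. If $\big\|A^2 f\big\|_{\cX} = 0$, letting $t \to \infty$ forces $Af = 0$ and \eqref{2.2} holds trivially; otherwise the choice $t = 2 \|f\|_{\cX}^{1/2} \big\|A^2 f\big\|_{\cX}^{-1/2}$ (or, equivalently, the arithmetic--geometric mean inequality applied to the two summands) produces $\|Af\|_{\cX} \leq 2 \big(\|f\|_{\cX} \big\|A^2 f\big\|_{\cX}\big)^{1/2}$, which is precisely \eqref{2.2}. No step here presents a genuine obstacle; the only places demanding a little care are the justification of the vector-valued Taylor expansion --- differentiability of $t \mapsto T(t) f$ on the closed half-line together with integration of the continuous remainder --- and the degenerate case $A^2 f = 0$ in the final minimization. (That $4$ is in general the best constant is already visible from Landau's inequality \eqref{1.2}, i.e., from the choice $A = d/dx$ on $L^\infty((0,\infty); dx)$.)
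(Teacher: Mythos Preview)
Your argument is correct and matches the paper's first proof essentially line for line: the same Taylor-type identity $tAf = T(t)f - f - \int_0^t (t-s)T(s)A^2 f\,ds$, the same estimate $\|Af\|_{\cX} \leq 2t^{-1}\|f\|_{\cX} + 2^{-1}t\|A^2f\|_{\cX}$, and the same optimization in $t$ (with the degenerate case $A^2f=0$ handled identically). The paper also records a second, functional-analytic proof due to Certain and Kurtz that lifts Landau's scalar inequality to the Banach-space setting via duality, but your approach coincides with the primary one given.
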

%%%%%%%%%%%%%%%
\begin{proof}[Sketch of proof.] There are at least two well-known proofs of Theorem \ref{t2.1}. Without loss of generality we assume that $c=1$ in the following. \\[1mm] 
$(i)$ The standard semigroup proof, presented, for instance, in \cite[Theorem 9.8]{Go85}, repeatedly uses the fact $(d/ds)T(s)f=AT(s)f=T(s)Af$, $f\in\dom(A)$, and then derives the relation 
\begin{equation}\lb{2.8}
tAf=T(t)f-f-\int_0^tds\ (t-s)T(s)A^2f, \quad f\in\dom\big(A^2\big). 
\end{equation}
The triangle inequality and the contraction property of $T(\, \cdot \,)$ imply 
\begin{align}\lb{2.9}
\no  \|Af\|_{\cX} & \leq t^{-1}[\|T(t)f\|_{\cX} + \|f\|_{\cX}] + t^{-1}\int_0^tds\ (t-s)\big\|T(s)A^2f\big\|_{\cX} \\
\no &\leq t^{-1}[\|f\|_{\cX} + \|f\|_{\cX}] + t^{-1}\int_0^tds\ (t-s)\big\|A^2f\big\|_{\cX} \\
&=\dfrac{2}{t} \, \|f\|_{\cX} + \dfrac{t}{2} \, \big\|A^2f\big\|_{\cX},\quad f\in\dom\big(A^2\big).
\end{align}
If $A^2f=0$, letting $t\uparrow\infty$ in \eqref{2.9} shows that $Af=0$. If $A^2f\neq0$, one minimizes the right-hand side of \eqref{2.9} over $t > 0$ by noting that the minimum occurs at $t=2 \, \|f\|_{\cX}^{1/2}\big/\big\|A^2f\big\|_{\cX}^{1/2}$, proving \eqref{2.2}.  \\[1mm] 
$(ii)$ A functional analytic proof of \eqref{2.2} was presented by Certain and Kurtz \cite{CK77}. It uses Landau's inequality \cite{La13}
\begin{align}
&\|f'\|_{L^{\infty}((0,\infty);dx)}^2 \leq 4 \, \|f\|_{L^{\infty}((0,\infty);dx)} \|f''\|_{L^{\infty}((0,\infty);dx)},  \no \\
& f\in \big\{g \in L^{\infty}((0,\infty);dx) \, \big| \, g,g'\in AC([0,R])\text{ for all }R>0,  
\lb{2.9a} \\
& \hspace{5.7cm} g''\in L^{\infty}((0,\infty);dx)\big\}     \no \\
& \hspace*{6mm} \quad = W^{2,\infty}((0,\infty)),     \no 
\end{align}
as input and then derives the following inequality from it: Let $\cX$ be a Banach space with norm 
$\|\, \cdot \,\|_{\cX}$, and functions $F \colon [0,\infty) \to \cX$ with $||| \, \cdot \, |||$ abbreviating 
\begin{equation}
|||F||| = \sup_{t \in [0,\infty)} \|F(t)\|_{\cX}. 
\end{equation}
Next, assume that $F \colon [0,\infty) \to \cX$ has two continuous derivatives with $|||F||| < \infty$ and 
$|||F''||| < \infty$. If the functional $\ell \in \cX^*$ is normalized, $\|\ell\|_{\cX^*} =1$, then the function 
$g(t) = \ell(F(t))$, $t \in [0,\infty)$, is twice continuously differentiable with $g^{(k)}(t) = \ell\big(F^{(k)}(t)\big)$, $1 \leq k \leq 2$. Hence Landau's inequality \eqref{2.9a} for $g$ yields
\begin{align}
\big(\sup_{t \in [0,\infty)} |\ell(F'(t))|\big)^2 
& \leq 4 \sup_{t \in [0,\infty)} |\ell(F(t))| \sup_{t \in [0,\infty)} |\ell(F''(t))|   \no \\
& \leq 4 \, |||F||| \,  |||F''|||
\end{align} 
since $\|\ell\|_{\cX^*} =1$. Taking the supremum over all $\ell \in \cX^*$ with $\|\ell\|_{\cX^*} =1$, employing 
the fact that $\|F(t)\|_{\cX} = \sup_{\ell \in \cX^*, \|\ell\|_{\cX^*}=1} |\ell(F(t))|$, $t \in [0,\infty)$, proves
\begin{equation}
|||F'|||^2 \leq 4 \, |||F||| \, |||F''|||.   \lb{2.9b}
\end{equation}
If $A$ is the generator of a $C_0$ contraction semigroup $T(t)$, $t \in [0,\infty)$, then for $f \in \dom\big(A^2\big)$, $F(t) = T(t) f$ is twice continuously differentiable and combining \eqref{2.9b} with 
\begin{equation}
\big|\big|\big| F^{(k)} \big|\big|\big| = \sup_{t \in [0,\infty)} \big\| T(t) A^k f\big\|_{\cX} = \big\|A^k f\big\|_{\cX}, 
\quad k = 0, 1, 2, 
\end{equation}
yields the estimate \eqref{2.2}. 
\end{proof}
%%%%%%%%%%%%%%%

%%%%%%%%%%%%%%%
\begin{remark} \lb{r2.2}
$(i)$ For additional literature in the context of Hardy, Kallman, Landau, Littlewood, Rota inequalities we refer to 
\cite{Bo73}, \cite{Bo73a}, \cite{Co77}, \cite{EJ77}, \cite{EZ78}, \cite{GG75}, \cite{GKZ83}, \cite{Ho74}, \cite{KZ79}, \cite{KZ79a}, \cite{KZ80}, \cite{KZ81}, \cite{KZ81a}, \cite{KZ84}, \cite[Ch.~2]{KZ92}, \cite{Pa81}.  \\[1mm] 
$(ii)$ The identical strategy of proof in \cite{CK77} extends the higher-order $L^{\infty}$-inequalities,
\begin{align} 
& \big\|f^{(k)}\big\|_{L^{\infty}((0,\infty);dx)}^n \leq C_{n,k}(\infty, \bbR_+) \, \|f\|_{L^{\infty}((0,\infty);dx)}^{n-k} 
\big\|f^{(n)}\big\|_{L^{\infty}((0,\infty);dx)}^k,   \no \\  
& \hspace*{5.45cm} k \in \{1, \dots, n-1\}, \; n \in \bbN, \; n \geq 2,   \no \\
& f\in \big\{g \in L^{\infty}((0,\infty);dx) \,\big| \, g, \dots, g^{(n-1)}\in AC([0,R])\text{ for all }R>0,  
\lb{2.9c} \\
& \hspace{6.75cm} g^{(n)}\in L^{\infty}((0,\infty);dx)\big\}    \no \\
& \hspace*{6mm} \quad = W^{n,\infty}((0,\infty)),      \no 
\end{align}
to higher-order analogs of the Kallman--Rota estimate \eqref{2.2} of the form, 
\begin{align}
\begin{split} 
\big\|A^k f \big\|_{\cX}^n \leq C_{n,k}(\infty, \bbR_+) \, 
\|f\|_{\cX}^{n-k} \big\|A^nf\big\|_{\cX}^k, \quad f\in\dom\big(A^n\big),&   \\
 k \in \{1, \dots, n-1\}, \; n \in \bbN, \; n \geq 2.&     \lb{2.9d} 
\end{split} 
\end{align}
For additional results in the higher-order cases, see, for instance, \cite{Ba98}, \cite{BE74}, \cite{Ch79}, 
\cite{GG75}, \cite{GR90}, \cite{Ku75}, \cite{KZ79a}, \cite{KZ91}, \cite[Ch.~1]{KZ92}, \cite{Lj64},  
\cite{Ne80}, \cite{Pr79}, \cite{Ra96}, \cite{SE89}, \cite{St65}, \cite{TW72}.  \\[1mm]
$(iii)$ For norm inequalities in connection with generators of cosine operator functions 
(cf.\ \cite[Sects.~3.14--3.16]{ABHN01}, \cite[Ch.~II]{Fa85}, \cite[Sect.~II.8]{Go85}) we refer, for instance, to \cite{LM10}, \cite{SE89}. The case of analytic semigroups (cf.\ \cite[Sect.~3.7]{ABHN01}, \cite[Sect.~2.5]{Da80}, \cite[Sect.~1.5]{Go85}) is treated in \cite{SE89}. \hfill$\diamond$
\end{remark}
%%%%%%%%%%%%%%%

Theorem \ref{t2.1} can be rewritten replacing the contraction semigroup by a uniformly bounded semigroup, that is, a semigroup such that for some $M \geq 1$,
\begin{equation} 
\|T(t)\|_{B(\cX)}\leq M, \quad t \in [0,\infty).
\end{equation}

%%%%%%%%%%%%%%%
\begin{corollary} \lb{c2.3} $($Pazy \cite[Lemma 2.8]{Pa83}$)$.
Let $c \in \bbC\backslash\{0\}$ and suppose that $cA$ generates a $C_0$ semigroup $T(t)$ satisfying $\|T(t)\|_{B(\cX)}\leq M$, $t \in [0,\infty)$, 
on a Banach space $\cX$. Then  
\begin{equation}
\|Af\|_{\cX}^2 \leq 4 \, M^2 \, \|f\|_{\cX} \big\|A^2f\big\|_{\cX}, \quad f\in\dom\big(A^2\big).
\end{equation}
\end{corollary}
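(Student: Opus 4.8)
The plan is to reduce Corollary \ref{c2.3} to the Kallman--Rota inequality (Theorem \ref{t2.1}) by renorming the Banach space $\cX$ so that the uniformly bounded semigroup becomes a contraction semigroup with respect to the new norm, and then comparing the two norms. Without loss of generality take $c=1$, so that $A$ itself generates the $C_0$ semigroup $T(t)$ with $\|T(t)\|_{\cB(\cX)}\le M$ for all $t\in[0,\infty)$.

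First I would introduce the equivalent norm
\begin{equation}
\vertiii{x} := \sup_{t\in[0,\infty)} \|T(t)x\|_{\cX}, \quad x\in\cX.
\end{equation}
The semigroup property $T(s)T(t)=T(s+t)$ and $T(0)=I_{\cX}$ give, for every $s\in[0,\infty)$,
\begin{equation}
\vertiii{T(s)x} = \sup_{t\in[0,\infty)}\|T(t)T(s)x\|_{\cX} = \sup_{t\ge s}\|T(t)x\|_{\cX} \le \vertiii{x},
\end{equation}
so $T(s)$ is a $\vertiii{\cdot\,}$-contraction for each $s$. Moreover
\begin{equation}
\|x\|_{\cX} = \|T(0)x\|_{\cX} \le \vertiii{x} \le M\,\|x\|_{\cX}, \quad x\in\cX,
\end{equation}
so $\vertiii{\cdot\,}$ and $\|\cdot\|_{\cX}$ are equivalent norms; in particular $(\cX,\vertiii{\cdot\,})$ is again a Banach space, the identity map between the two normed spaces is a homeomorphism, and hence $\{T(t)\}_{t\ge0}$ is a $C_0$ semigroup on $(\cX,\vertiii{\cdot\,})$ with the same generator $A$ and the same domains $\dom(A)$, $\dom(A^2)$.

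Next I would apply Theorem \ref{t2.1} in the Banach space $(\cX,\vertiii{\cdot\,})$, where $A$ generates the $\vertiii{\cdot\,}$-contraction semigroup $T(t)$, to obtain
\begin{equation}
\vertiii{Af}^2 \le 4\, \vertiii{f}\, \vertiii{A^2 f}, \quad f\in\dom\big(A^2\big).
\end{equation}
Finally, using $\|Af\|_{\cX}\le \vertiii{Af}$ on the left and $\vertiii{f}\le M\|f\|_{\cX}$, $\vertiii{A^2f}\le M\|A^2f\|_{\cX}$ on the right yields
\begin{equation}
\|Af\|_{\cX}^2 \le \vertiii{Af}^2 \le 4\,\vertiii{f}\,\vertiii{A^2f} \le 4\,M^2\,\|f\|_{\cX}\,\big\|A^2f\big\|_{\cX}, \quad f\in\dom\big(A^2\big),
\end{equation}
which is the asserted estimate. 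There is no serious obstacle here; the only point requiring a little care is checking that passing to the equivalent norm does not disturb the $C_0$ property or the generator — this is immediate because norm equivalence means the two topologies coincide, so continuity of $t\mapsto T(t)f$ and existence of the defining limit \eqref{2.1} are unaffected. (An alternative, more computational route would be to rerun the semigroup proof $(i)$ of Theorem \ref{t2.1} verbatim, replacing the contraction bound $\|T(s)\|\le 1$ by $\|T(s)\|\le M$ in \eqref{2.9}; this produces $\|Af\|_{\cX}\le 2M t^{-1}\|f\|_{\cX} + (Mt/2)\|A^2f\|_{\cX}$, and minimizing over $t>0$ gives the same $4M^2$ constant. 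I would mention this as a remark but present the renorming argument as the main proof, since it is shorter and conceptually cleaner.) One should also note the degenerate cases $A^2f=0$ (let $t\uparrow\infty$) and $f=0$ are handled exactly as in the proof of Theorem \ref{t2.1}.
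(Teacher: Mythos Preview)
Your renorming argument is correct. The paper, however, takes precisely the ``alternative, more computational route'' you relegate to a parenthetical remark: it simply inserts the bound $\|T(t)\|_{\cB(\cX)}\le M$ into the derivation of \eqref{2.9} (using $M\ge 1$ since $T(0)=I_{\cX}$ to replace $M+1$ by $2M$) and then minimizes over $t>0$ exactly as before.

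The two approaches are of course equivalent in strength here. Your renorming proof has the advantage that it reduces the uniformly bounded case to the contraction case as a black box, so it automatically transfers any contraction-semigroup inequality (for instance the higher-order estimates \eqref{2.9d}) to the uniformly bounded setting with the same $M$-dependence, without redoing any computation. The paper's direct approach is more elementary---one line, no need to discuss why the generator and the $C_0$ property survive the renorming---and, if one refrains from the crude step $M+1\le 2M$, actually yields the marginally sharper constant $2M(M+1)$ in place of $4M^2$.
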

%%%%%%%%%%%%%%%
\begin{proof}
One obtains this result by introducing the constant $M$ when bounding $T(t)$ in equation \eqref{2.9}, noting that $M\geq1$ as $T(0)=I_{\cX}$.
\end{proof}
%%%%%%%%%%%%%%%

%%%%%%%%%%%%%%%
\begin{example}\lb{e2.4}$($See, \cite[Example 9.10]{Go85}$)$.
Consider $A=d/dx$ in $L^p((0,\infty);dx)$, $p \in [1,\infty) \cup \{\infty\}$, and abbreviate 
$\bbR_+ = (0,\infty)$. Then 
\begin{align}
&\|f'\|_{L^p((0,\infty);dx)}^2 \leq C_{2,1}(p,\bbR_+) \, \|f\|_{L^p((0,\infty);dx)} \|f''\|_{L^p((0,\infty);dx)},  \no \\
& f\in\dom\bigg(\dfrac{d^2}{dx^2}\bigg)=\big\{g \in L^p((0,\infty);dx) \,\big| \, g,g'\in AC([0,R])\text{ for all }R>0,  
\lb{2.10} \\
& \hspace{8.1cm} g''\in L^p((0,\infty);dx)\big\}      \no \\
& \hspace*{2.15cm} \quad = W^{2,p}((0,\infty)),    \no 
\end{align}
where
\begin{align}
\begin{split} 
& C_{2,1}(1,\bbR_+) = 5/2, \quad C_{2,1}(2,\bbR_+) = 2, \\ 
& C_{2,1}(p,\bbR_+) \leq 4 
= C_{2,1}{(\infty,\bbR_+)}, \; p \in [1,\infty) \cup \{\infty\}.    \lb{2.11} \\
\end{split}
\end{align} 
Landau \cite{La13} proved the case $p=\infty$ in 1913; Hardy and Littlewood \cite{HL32} proved the case $p=2$ in 1932 $($cf.\ also \cite{Co77}, \cite[Theorem 2.2]{KZ92}$)$; Berdyshev \cite{Be71} proved the case $p=1$ in 1971. The best possible constant $C_{2,1}(p,\bbR_+)$ in \eqref{2.10} is not known otherwise, however, it is known to be a continuous function of $p$ $($cf. \cite[Theorem 2.19]{KZ92}$)$; for more details see \cite{FKKZ85}, \cite[Ch.~2]{KZ92}. 
\end{example}
%%%%%%%%%%%%%%%

Theorem \ref{t2.1}  can be refined in the case of Hilbert spaces as seen in the next result, the original proof of which is due to Kato \cite{Ka71}.

%%%%%%%%%%%%%%%
\begin{theorem}\lb{t2.5} $($Kato \cite{Ka71}, see also \cite[Theorem 9.9]{Go85}, \cite{KZ03}$)$.
Let $c \in \bbC\backslash\{0\}$ and suppose that $cA$ generates a $C_0$ contraction semigroup $T(t)$,  $t \in [0,\infty)$, on a Hilbert space $\cH$. Then  
\begin{equation}\lb{2.12}
\|Af\|_{\cH}^2 \leq 2 \, \|f\|_{\cH} \big\|A^2f\big\|_{\cH}, \quad f\in\dom\big(A^2\big), 
\end{equation}
and equality holds if and only if for some $D \in[0,\infty)$,
\begin{equation}
c^2 A^2f + D c Af + D^2f=0 \, \text{ and } \, \Re\big(\big(f, c^2 A^2f\big)_{\cH}\big)=0.    \lb{2.13}
\end{equation}
In the case of equality one necessarily has
\begin{equation}
|c|^2 \big\|A^2 f\big\|_{\cH} = D^2 \|f\|_{\cH}, \quad f\in\dom\big(A^2\big). 
\end{equation}

In fact, the hypothesis that $c A$ generates a $C_0$ contraction semigroup is unnecessary, it suffices to assume that 
\begin{equation}
\Re((f, cAf)_{\cH}) \leq 0, \quad f \in \dom(A).   \lb{2.18} 
\end{equation}
\end{theorem}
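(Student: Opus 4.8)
The plan is to reduce the statement to the single hypothesis \eqref{2.18}, then to probe \eqref{2.18} along a one–complex–parameter family of test vectors built from $f$ and $Af$, and finally to extract the sharp constant $2$ by passing to the (at most two–dimensional) subspace spanned by $f$ and $Af$. As preliminary reductions: replacing $A$ by $cA$ reduces us to $c=1$ (this multiplies both sides of \eqref{2.12} by $|c|^2$ and puts \eqref{2.13} in normalized form); and if $cA$ generates a $C_0$ contraction semigroup $T(\dott)$ on $\cH$, then $t\mapsto\|T(t)f\|_{\cH}^2$ is non-increasing for $f\in\dom(A)$, so $2\,\Re\big((f,cAf)_{\cH}\big)=\frac{d}{dt}\|T(t)f\|_{\cH}^2\big|_{t=0^+}\le 0$. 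Hence the semigroup hypothesis already implies \eqref{2.18}, and it suffices to prove the inequality and the equality description assuming only \eqref{2.18}; this simultaneously yields the final assertion of the theorem. Fix now $f\in\dom\big(A^2\big)$, so $f,Af\in\dom(A)$.

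For the core estimate, apply \eqref{2.18} to $w_z:=f+z\,Af\in\dom(A)$, $z\in\bbC$. Expanding $\Re\big((w_z,Aw_z)_{\cH}\big)\le 0$ gives a real quadratic in $(\Re z,\Im z)$ whose leading coefficient is $\Re\big((Af,A^2f)_{\cH}\big)$; requiring it to stay $\le 0$ for all $z$ forces $\Re\big((f,Af)_{\cH}\big)\le 0$ (take $z=0$), $\Re\big((Af,A^2f)_{\cH}\big)\le 0$ (let $|z|\to\infty$), and, on maximizing over $z$,
\begin{equation*}
\big(\|Af\|_{\cH}^2+\Re\big((f,A^2f)_{\cH}\big)\big)^2+\big(\Im\big((f,A^2f)_{\cH}\big)\big)^2\le 4\,\big|\Re\big((f,Af)_{\cH}\big)\big|\,\big|\Re\big((Af,A^2f)_{\cH}\big)\big|.
\end{equation*}

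To convert this into \eqref{2.12} I would compress $A$ to $\cW:=\operatorname{span}\{f,Af\}\subseteq\dom(A)$, a space of dimension at most $2$: the compression $A_{\cW}:=P_{\cW}A|_{\cW}$ is a bounded dissipative operator on $\cW$ with $A_{\cW}f=Af$ and $A_{\cW}^{2}f=P_{\cW}A^2f$, and since $(f,A_{\cW}^{2}f)_{\cH}=(f,A^2f)_{\cH}$ and $(Af,A_{\cW}^{2}f)_{\cH}=(Af,A^2f)_{\cH}$, the displayed inequality holds verbatim with $A^2f$ replaced by $A_{\cW}^{2}f$; as $\|A_{\cW}^{2}f\|_{\cH}\le\big\|A^2f\big\|_{\cH}$, it suffices to prove $\|Af\|_{\cH}^2\le 2\,\|f\|_{\cH}\,\|A_{\cW}^{2}f\|_{\cH}$, i.e.\ the inequality for the $(\le 2)\times(\le 2)$ dissipative matrix $A_{\cW}$. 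In this finite-dimensional case one parametrizes $A_{\cW}$ (say in triangular Schur form, where dissipativity becomes a single scalar condition on the off-diagonal entry, and $f$ has only two coordinates) and verifies the bound directly, tracking equality: equality in \eqref{2.12} for $A$ forces both equality for $A_{\cW}$ and $\|P_{\cW}A^2f\|_{\cH}=\big\|A^2f\big\|_{\cH}$, hence $A^2f\in\cW$; the $2\times2$ computation then pins down $A^2f=-D\,Af-D^2f$ for some $D\in[0,\infty)$ — the roots $D e^{\pm 2\pi i/3}$ of $\lambda^2+D\lambda+D^2$ lying in the closed left half-plane, exactly as \eqref{2.18} demands — together with $\Re\big((f,A^2f)_{\cH}\big)=0$, which is \eqref{2.13}; combining \eqref{2.13} with the equality in \eqref{2.12} then yields $\big\|A^2f\big\|_{\cH}=D^2\|f\|_{\cH}$.

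The main obstacle is this last, finite-dimensional step. The displayed key inequality together with only the pairwise Cauchy–Schwarz inequalities among $f$, $Af$, $A^2f$ is not enough — it yields merely the constant $3+2\sqrt{2}$ — so one genuinely has to exploit that (after compression) $A^2f$ lies in $\operatorname{span}\{f,Af\}$, i.e.\ the full two-dimensional Hilbert-space rigidity of the triple, to bring the constant down to the optimal value $2$. This is precisely the heart of Kato's original argument in \cite{Ka71}, which one may alternatively reproduce directly.
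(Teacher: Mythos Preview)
Your starting point is exactly right: apply the dissipativity condition \eqref{2.18} to the family $f+z\,Af$. But you then take a long detour and leave the decisive step undone. The paper's proof (following \cite{KZ03}) stays with \emph{real} $z=d>0$ and observes that the resulting inequality $\Re\big(([dA+I_{\cH}]f,\,dA[dA+I_{\cH}]f)_{\cH}\big)\le 0$ can be rewritten, by a direct expansion, as the identity
\[
d^2\|Af\|_{\cH}^2 + \big\|\big[d^2A^2+dA+I_{\cH}\big]f\big\|_{\cH}^2 \le d^4\big\|A^2f\big\|_{\cH}^2 + \|f\|_{\cH}^2.
\]
Dropping the nonnegative middle term and choosing $d^2=\|f\|_{\cH}/\big\|A^2f\big\|_{\cH}$ yields \eqref{2.12} immediately; the equality analysis falls out by tracking when the dropped term vanishes and working backwards. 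This algebraic regrouping is the single idea you are missing, and it renders the complex parameter, the maximization, and the compression to $\cW$ all unnecessary.

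Your route via compression to $\cW=\operatorname{span}\{f,Af\}$ is not wrong in principle --- $P_{\cW}A|_{\cW}$ is indeed dissipative on $\cW$, and the reduction via $\|P_{\cW}A^2f\|_{\cH}\le\big\|A^2f\big\|_{\cH}$ is valid --- but you then leave the $2\times 2$ case as ``one parametrizes \ldots\ and verifies the bound directly,'' and you yourself flag it as ``the main obstacle'' and as ``the heart of Kato's original argument.'' That is a genuine gap: the proposal reduces the problem but does not solve it. Note, incidentally, that the one-line argument above works in any Hilbert space, in particular in the two-dimensional one, so the compression step purchases nothing; the sharp constant $2$ is already available at the level of your displayed inequality once one sees the right completion of squares.
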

%%%%%%%%%%%%%%%
\begin{proof}[Sketch of proof.]
We briefly follow the proof provided in \cite{KZ03}, assuming without loss of generality that $c=1$. In the following, let $d > 0$ and $f \in \dom\big(A^2\big)$, then
\begin{equation}
\Re(([d A +I_{\cH}]f, d A [d A +I_{\cH}]f)_{\cH}) \leq 0   \lb{2.19} 
\end{equation}
implies
\begin{align}
\begin{split} 
& (d Af, d Af)_{\cH} + \big(\big[d^2 A^2 +d A + I_{\cH}\big]f, \big[d^2 A^2 +d A + I_{\cH}\big]f\big)_{\cH} \\
& \quad \leq \big(d^2A^2f, d^2A^2f\big)_{\cH} + \|f\|_{\cH}^2,   \lb{2.20} 
\end{split}
\end{align}
and hence
\begin{equation}
d^2 \|Af\|_{\cH}^2 \leq d^4 \big\|A^2 f\big\|_{\cH}^2 + \|f\|_{\cH}^2.   \lb{2.21} 
\end{equation}
Dividing \eqref{2.21} by $d^2$ and taking $d \to \infty$ yields that if $\big\|A^2 f\big\|_{\cH}=0$, then 
$ \|Af\|_{\cH} = 0$ and hence \eqref{2.12} holds. Otherwise, let $d^2 = \|f\|_{\cH} \big\|A^2 f\big\|_{\cH}^{-1}$ and then \eqref{2.21} once more yields \eqref{2.12}. 

Next, assume that for some $0 \neq f \in \dom\big(A^2\big)$, 
\begin{equation} 
\|Af\|_{\cH}^2 = 2 \, \|f\|_{\cH} \big\|A^2f\big\|_{\cH}.
\end{equation} 
If $A^2f = 0$ then $\|Af\|_{\cH} = 0$ and \eqref{2.13} holds with $D = 0$. If $A^2f \neq 0$, let 
$d^2 = \|f\|_{\cH} \big\|A^2 f\big\|_{\cH}^{-1}$ and inserting this into \eqref{2.20} implies 
$d^2 A^2 f + d Af + f =0$ and hence the first relation in \eqref{2.13} with $D = d^{-1}$. Working back from 
\eqref{2.20} to \eqref{2.19} yields
\begin{align}
\begin{split} 
0 &= \Re(([d A + I_{\cH}]f, d A [d A + I_{\cH}]f)_{\cH}) 
= \Re \big(\big([d A + I_{\cH}]f, \big[d^2A^2 + d A\big]f\big)_{\cH}\big)    \\
&= \Re \big(\big(d^2 A^2f, [-f]\big)_{\cH}\big) = - d^2 \, \Re \big(\big(f, A^2 f\big)_{\cH}\big),
\end{split} 
\end{align}
implying  the last relation in \eqref{2.13}.

Conversely, suppose that for some $D_0\in(0,\infty)$, 
\begin{align}\lb{2.28}
A^2f + D_0 Af + D_0^2f=0\text{ and }\Re\big(\big(f, A^2f\big)_{\cH}\big)=0.
\end{align}
From \eqref{2.28} one obtains
\begin{align}\lb{2.29}
\no  \|Af\|_{\cH}^2&= D_0^{-2}\big\|A^2f + D_0^2 f\big\|_{\cH}^2 
= D_0^{-2}\Big[\big\|A^2f\big\|_{\cH}^2 + D_0^4 \, \|f\|_{\cH}^2 
+ 2 D_0^2 \, \Re\big(\big(f, A^2f\big)_{\cH}\big) \Big]\\
&= D_0^{-2}\big\|A^2f\big\|_{\cH}^2 + D_0^2 \, \|f\|_{\cH}^2.
\end{align}
Since $\|Af\|_{\cH}^2 \leq 2 \, \|f\|_{\cH} \big\|A^2f\big\|_{\cH}$, it follows that
\begin{equation}
D_0^{-2} \big\|A^2f\big\|_{\cH}^2 - 2 \|f\|_{\cH} \big\|A^2f\big\|_{\cH} +D_0^2\|f\|_{\cH}^2\leq0,
\end{equation}
hence $\big[D_0^{-1}\big\|A^2f\big\|_{\cH} - D_0\|f\|_{\cH}\big]^2 \leq 0$,
or $D_0^{-1} \big\|A^2f\big\|_{\cH} = D_0 \, \|f\|_{\cH}$, equivalently, 
\begin{equation}\lb{2.32}
\big\|A^2 f\big\|_{\cH} = D_0^2 \, \|f\|_{\cH}.
\end{equation}
Substituting \eqref{2.32} into \eqref{2.29} yields equality in \eqref{2.12}. 
\end{proof}
%%%%%%%%%%%%%%%

The following result is a natural extension of Example \ref{e2.4} in the case $p=2$ and it illustrates a particular case of Theorem \ref{t2.5}. While this observation is not new, we record its proof due to its simplicity:

%%%%%%
\begin{lemma} \lb{l2.6} $($Ljubi{\v c} \cite[Theorem~5]{Lj64}, see also Chernoff \cite{Ch79} and Kwong and Zettl \cite{KZ79b}$)$.
Suppose $S$ is a maximally symmetric, non-self-adjoint operator in a Hilbert space $\cH$. 
Let $A = c \, S^*$ for some $c \in \bbC\backslash\{0\}$. Then 
\begin{equation}
\|Af\|_{\cH}^2\leq 2 \, \|f\|_{\cH} \big\|A^2f\big\|_{\cH}, \quad f\in\dom\big(A^2\big).     \lb{2.33} 
\end{equation} 
The constant $2$ is optimal and the case of equality in \eqref{2.33} is determined as in \eqref{2.13} applied to 
$\pm i S^*$. 
\end{lemma}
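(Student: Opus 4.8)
The plan is to deduce Lemma~\ref{l2.6} directly from Theorem~\ref{t2.5} by verifying its hypothesis \eqref{2.18} for the operator $A = cS^*$. Recall that since $S$ is maximally symmetric but not self-adjoint, exactly one of its deficiency indices vanishes; without loss of generality say $n_+(S) = \dim\ker(S^* - iI_{\cH}) = 0$, so that $iS$ (equivalently, the closed symmetric operator $S$ itself rotated) generates a $C_0$ contraction semigroup by the Lumer--Phillips theorem. Equivalently, $iS^*$ satisfies $\Re((f, iS^* f)_{\cH}) \geq 0$ and $-iS^*$ satisfies $\Re((f, -iS^* f)_{\cH}) \leq 0$ for all $f \in \dom(S^*)$. (If instead $n_-(S) = 0$, one uses $+iS^*$ in place of $-iS^*$; this is the meaning of the phrase ``applied to $\pm iS^*$'' in the statement.)

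The key steps, in order: \textbf{(1)} Identify which deficiency index of $S$ is zero and record the corresponding dissipativity: there is a sign $\varepsilon \in \{+1, -1\}$ such that $\Re((f, \varepsilon i S^* f)_{\cH}) \leq 0$ for all $f \in \dom(S^*)$. \textbf{(2)} Write $c = |c| e^{i\varphi}$ and observe that $\Re((f, c S^* f)_{\cH})$ need not have a fixed sign, so one cannot apply Theorem~\ref{t2.5} to $A = cS^*$ directly; instead apply Theorem~\ref{t2.5} to the operator $B := \varepsilon i S^*$, which by Step~(1) satisfies \eqref{2.18} with constant $c=1$, yielding
\begin{equation}
\|B f\|_{\cH}^2 \leq 2\, \|f\|_{\cH}\, \big\|B^2 f\big\|_{\cH}, \quad f \in \dom\big(B^2\big).
\end{equation}
\textbf{(3)} Note $\dom(B^2) = \dom\big((S^*)^2\big) = \dom\big(A^2\big)$, and that $\|Bf\|_{\cH} = \|S^* f\|_{\cH} = |c|^{-1}\|A f\|_{\cH}$ while $\|B^2 f\|_{\cH} = \|(S^*)^2 f\|_{\cH} = |c|^{-2}\|A^2 f\|_{\cH}$; substituting these identities into the inequality from Step~(2) and clearing the powers of $|c|$ gives precisely \eqref{2.33}. \textbf{(4)} For optimality of the constant $2$: since $S$ is not self-adjoint, the nonzero deficiency space $\ker(S^* + \varepsilon i I_{\cH})$ is nontrivial; picking $0 \neq g$ in it gives $S^* g = -\varepsilon i g$, hence $(S^*)^2 g = -g$, and one checks that the case-of-equality conditions \eqref{2.13} for $B = \varepsilon i S^*$ hold with $D = 1$ (indeed $B^2 g + B g + g = -g + (\varepsilon i)(-\varepsilon i) g + \ldots$; more carefully, take a suitable real linear combination $f = g_1 + g_2$ forcing $B^2 f + D B f + D^2 f = 0$ and $\Re((f, B^2 f)_{\cH}) = 0$), so equality is attained and $2$ cannot be lowered.

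The main obstacle is Step~(4), the sharpness argument: one must actually produce a nonzero $f \in \dom\big((S^*)^2\big)$ realizing equality, and the conditions \eqref{2.13} couple an algebraic constraint ($c^2 A^2 f + D c A f + D^2 f = 0$) with the orthogonality constraint $\Re\big(\big(f, c^2 A^2 f\big)_{\cH}\big) = 0$. The natural candidates live in the deficiency subspace of $S$, where $S^*$ acts as a scalar multiple of the identity, so the algebraic constraint becomes a scalar polynomial identity that pins down $D$; the orthogonality constraint then typically forces $f$ to be a genuine two-term combination rather than a single deficiency vector, and verifying that such a combination lies in $\dom\big((S^*)^2\big)$ and satisfies both constraints simultaneously requires a small but careful computation. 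One should also briefly confirm that $\dim\ker(S^* + \varepsilon i I_{\cH}) \geq 1$ is all that is needed (equality in the Kato bound is achieved even on a one-dimensional piece once the right real combination within it, or within $\mathrm{span}\{g, S^* g\} \subseteq \dom\big((S^*)^2\big)$, is chosen), so non-self-adjointness of $S$ is exactly the hypothesis that makes the example non-vacuous. The remaining steps are routine given Theorem~\ref{t2.5} and standard von Neumann deficiency-index theory for maximally symmetric operators.
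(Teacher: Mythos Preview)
Your Steps (1)--(3) are correct and essentially reproduce the paper's argument: identify the vanishing deficiency index, conclude that one of $\pm i S^*$ is dissipative (equivalently, satisfies \eqref{2.18}), apply Theorem~\ref{t2.5} to $B=\varepsilon i S^*$, and rescale. The paper phrases this in terms of the spectrum of $S$ and m-accretivity of $\pm i S^*$, but the substance is the same.

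Your Step (4), however, is where things go wrong. First, your single-vector attempt fails, as you partly notice: if $S^* g = -\varepsilon i g$ then $Bg = -g$, so $B^2 g + D B g + D^2 g = (1 - D + D^2)g$, and $1 - D + D^2 = 0$ has no real root $D \ge 0$. Second, your proposed fix (a ``suitable real linear combination $f = g_1 + g_2$'') is never carried out; to make it work you would need eigenvectors of $B$ at the two roots $D e^{\pm 2\pi i/3}$ of $\lambda^2 + D\lambda + D^2 = 0$ (these do exist, since $S^*$ has eigenvectors throughout the relevant open half-plane), and then you must still arrange the orthogonality condition $\Re((f, B^2 f)_{\cH}) = 0$, which is not automatic because $g_1, g_2$ need not be orthogonal ($S^*$ is not normal). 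This is nontrivial and you have left it open.

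More to the point, the paper's optimality argument is far simpler and avoids all of this: it simply invokes Example~\ref{e2.4} with $p=2$, i.e., the classical Hardy--Littlewood inequality on $(0,\infty)$, which is a concrete instance of the lemma (take $S$ to be $i\, d/dx$ on a suitable domain in $L^2((0,\infty);dx)$) for which the constant $2$ is already known to be sharp with explicit extremals. Optimality of $2$ is a statement about the \emph{class} of all such $A = c S^*$, not about each individual $S$, so one example suffices. Your abstract construction, if completed, would prove the stronger (and not asserted) fact that every such $S$ admits nontrivial extremals; that is interesting but unnecessary here.
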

%%%%%%
\begin{proof}
Since $S$ is maximally symmetric yet non-self-adjoint, one of its deficiency indices equals zero, that is, 
\begin{equation}
\dim(\ker(S^* - i I_{\cH})) = 0, \, \text{ or } \,  \dim(\ker(S^* + i I_{\cH})) = 0,
\end{equation} 
but not both are zero simultaneously. Consequently (cf.\ \cite[Sect.~V.3.4]{Ka80}), the spectrum of $S$ is either the closed complex 
upper half-plane or the closed complex lower half-plane, respectively,
\begin{equation}
\sigma(S) = \ol{\bbC_+}  \, \text{ or } \,  \sigma(S) =\ol{\bbC_-}.
\end{equation}  
Thus, 
\begin{equation} 
\sigma(i S^*) = \{z \in \bbC \, | \, \Re(z) \leq 0\},  \, \text{ or } \,  
\sigma(i S^*) = \{z \in \bbC \, | \, \Re(z) \geq 0\}, 
\end{equation}
and hence one of $\pm i S^*$ is m-accretive and thus generates a contraction semigroup on $\cH$. (Actually, one of them satisfies \eqref{2.18} which suffices for our argument.) Since 
$A = c \, S^*$, the estimate \eqref{2.33} (and the case of equality therein) now follows from \eqref{2.12} (and \eqref{2.13}). That the constant $2$ is optimal is a consequence of Example \ref{e2.4} with $p=2$.
\end{proof}
%%%%%%

%%%%%%%%%%%%%%%
\begin{remark} \lb{r2.7} 
$(i)$ We emphasize that the constants to be found in Theorems \ref{t2.1}  and \ref{t2.5} are optimal as can be seen from Example \ref{e2.4}. The equality condition given in Theorem \ref{t2.5}, in the case $p=2$ in Example \ref{e2.4} then becomes 
\begin{align}
f'' + D f' + D^2f=0 \, \text{ and } \, \Re((f, f'')_{L^2((0,\infty);dx)}) 
= \Re\bigg(\int_0^\infty dx\, \ol{f(x)} f''(x)\bigg)=0. 
\end{align}
Hardy and Littlewood \cite{HL32} $($see also \cite[p.~187--188]{HLP88}$)$ showed these conditions determine $f$ to be
\begin{equation}
f_{C,D}(x)= C \, e^{- D x/2}\sin\big(3^{1/2} D (x/2) - (\pi/3)\big), 
\quad C \in\bbC,\; D > 0, \; x \geq 0.
\end{equation}
$(ii)$ Kato's original proof of Theorem \ref{t2.5} relied on properties of the Cayley transform of $A$. For additional results in this direction see, \cite{Bo73}, \cite{Bo73a}, \cite{Ch79}, \cite{Ev71}, \cite{GG81}, \cite{KZ79b}, 
\cite{Pa81}, \cite{Ph81}. 
\\[1mm] 
$(iii)$ For Theorem \ref{t2.5} under weaker hypotheses on $A$ see \cite{HO16}. \\[1mm] 
$(iv)$ Higher-order inequalities of the type \eqref{2.9d} with explicit constants in the Hilbert space context were derived by Protter \cite{Pr79}, thus extending Kato's result (where $k=1$, $n=2$) to the higher-order case. Additional results appeared in \cite{Ch79}, \cite{KZ79b}, \cite[Theorem~7]{Lj64}, \cite{Ph81}, in particular, these authors prove the following result: If $C_{n,k}(2,\bbR_+)$ denote the constants in the $L^2$-inequalities, 
\begin{align}
& \big\|f^{(k)}\big\|_{L^2((0,\infty);dx)}^n \leq C_{n,k}(2,\bbR_+) \, \|f\|_{L^2((0,\infty);dx)}^{n-k} 
\big\|f^{(n)}\big\|_{L^2((0,\infty);dx)}^k,    \no \\  
& \hspace*{4.95cm} k \in \{1, \dots, n-1\}, \; n \in \bbN, \; n \geq 2,   \no \\
& f\in \big\{g \in L^2((0,\infty);dx) \,\big| \, g, \dots, g^{(n-1)}\in AC([0,R])\text{ for all }R>0,  
\lb{2.35} \\
& \hspace{6.8cm} g^{(n)}\in L^2((0,\infty);dx)\big\}   \no \\
& \hspace*{6mm} \quad = H^n((0,\infty)),    \no 
\end{align} 
then the same constants feature in the corresponding higher-order inequalities involving the operator $A$ as follows: {\it Suppose that $A$ is a nonzero constant multiple of a generator of a contraction semigroup $T(t)$, $t \in [0,\infty)$ $($equivalently, a nonzero constant multiple of a densely defined dissipative operator\,$)$, on a Hilbert space $\cH$, then} 
\begin{align}
\begin{split} 
\big\|A^k f \big\|_{\cH}^n \leq C_{n,k}(2,\bbR_+) \, \|f\|_{\cH}^{n-k} \big\|A^nf\big\|_{\cH}^k, \quad f\in\dom\big(A^n\big),&   \\
 k \in \{1, \dots, n-1\}, \; n \in \bbN, \; n \geq 2.&     \lb{2.36} 
\end{split} 
\end{align}

We also recall the symmetry property \cite{Ch79}, \cite{KZ79b}, \cite[Corollary~1, p. 71]{Lj64}, 
\begin{equation}
C_{n,k}(2,\bbR_+) = C_{n,n-k}(2,\bbR_+), \quad k \in \{1, \dots, n-1\}, \; n \in \bbN, \; n \geq 2
\end{equation}
(a property not shared by $C_{n,k}(\infty,\bbR_+)$).

In particular, the extension of Lemma \ref{l2.6} to the case of the higher-order inequalities \eqref{2.36} (under the same hypothesis that $A = c \, S^*$, for $S$ maximally symmetric and non-self-adjoint, 
$c \in \bbC\backslash\{0\}$) has been observed by Ljubi{\v c} \cite[Theorem~5]{Lj64} in 1964, and was subsequently rederived by rather different means in Chernoff \cite{Ch79} and Kwong and Zettl \cite{KZ79b}. 
\hfill$\diamond$
\end{remark}
%%%%%%%%%%%%%%%

%%%%%%%%%%%%%%%%%%%%%%%%%%%%%%%
%%%%%%%%%%%%%%%%%%%%%%%%%%%%%%%
\section{Norm Inequalities for Generators of $C_0$ Groups} \lb{s3} 
%%%%%%%%%%%%%%%%%%%%%%%%%%%%%%%
%%%%%%%%%%%%%%%%%%%%%%%%%%%%%%%

As all these results so far have concerned semigroups, it is natural for one to ask what the corresponding results are when considering groups rather than semigroups.  

%%%%%%%%%%%%%%%
\begin{theorem}\lb{t3.1} $($See, \cite{Di75}, \cite{GR90}, \cite{KR70}$)$.
Let $c \in \bbC\backslash\{0\}$ and suppose that $cA$ generates a $C_0$ group $T(t)$, $t\in\bbR$, of isometries on a Banach space $\cX$. Then 
\begin{equation}
\|Af\|_{\cX}^2 \leq 2 \, \|f\|_{\cX} \big\|A^2f\big\|_{\cX}, \quad f\in\dom\big(A^2\big).    \lb{3.1}
\end{equation}
\end{theorem}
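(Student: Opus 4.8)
The plan is to imitate the semigroup proof of Theorem \ref{t2.1}, case $(i)$, but to exploit the fact that a group lets us integrate in both time directions, thereby recovering the better constant $2$ in place of $4$. Without loss of generality assume $c=1$, so that $A$ itself generates a $C_0$ group $T(t)$, $t\in\bbR$, with $\|T(t)\|_{\cB(\cX)}=1$ for all $t\in\bbR$ (isometries). As in \eqref{2.8}, Taylor's formula with integral remainder applied to $s\mapsto T(s)f$ for $f\in\dom\big(A^2\big)$ gives, for any $t\in\bbR$,
\begin{equation*}
t\,Af = T(t)f - f - \int_0^t ds\ (t-s)\,T(s)A^2f .
\end{equation*}
The idea is to average this identity evaluated at $t$ and at $-t$ for $t>0$: subtracting the $-t$ version from the $t$ version (or adding, depending on bookkeeping of the remainder term) the linear-in-$t$ term $2t\,Af$ survives while the zeroth-order pieces combine to $T(t)f - T(-t)f$, whose norm is at most $2\|f\|_{\cX}$ by the isometry property.

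The key steps, in order: first, record the two-sided Taylor identity above for $\pm t$; second, form the combination that isolates $2t\,Af$ and controls the endpoint terms by $2\|f\|_{\cX}$; third, estimate the two remainder integrals $\int_0^{\pm t} ds\,(t\mp s)\,T(s)A^2f$ in norm, each being bounded by $\frac{t^2}{2}\big\|A^2f\big\|_{\cX}$ since $\|T(s)\|_{\cB(\cX)}=1$; fourth, assemble these into
\begin{equation*}
\|Af\|_{\cX} \le \frac{1}{t}\,\|f\|_{\cX} + \frac{t}{2}\,\big\|A^2f\big\|_{\cX}, \quad t>0,
\end{equation*}
which is exactly the estimate \eqref{2.9} with the first coefficient halved from $2/t$ to $1/t$; fifth, conclude as in Theorem \ref{t2.1}: if $A^2f=0$ let $t\uparrow\infty$ to get $Af=0$, otherwise minimize the right-hand side over $t>0$, the minimum occurring at $t=\sqrt{2}\,\|f\|_{\cX}^{1/2}\big/\big\|A^2f\big\|_{\cX}^{1/2}$ and yielding \eqref{3.1}.

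The main obstacle — really the only subtlety — is getting the sign bookkeeping in the two-sided combination exactly right so that the first-order terms reinforce rather than cancel while the zeroth-order terms telescope into $T(t)f-T(-t)f$; one must be careful that the remainder integral $\int_0^{-t}ds\,(t-s)T(s)A^2f$ over a negative interval still obeys the clean bound $\tfrac{t^2}{2}\big\|A^2f\big\|_{\cX}$ after the change of variables $s\mapsto -s$, which it does because the weight $|t-s|$ stays bounded by $2t$ on the relevant range and a slightly more careful splitting (or simply expanding $T(\pm t)f$ about $0$ with the remainder written as $\int_0^{\pm t}(\pm t-s)T(s)A^2f\,ds$ and taking absolute values inside) recovers the $\tfrac{t^2}{2}$ factor. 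Alternatively, and perhaps more transparently, one can avoid the two-sided trick entirely by applying the Kallman--Rota inequality \eqref{2.2} — which holds here since $A$ generates a contraction semigroup (indeed a group of isometries) — but that only gives the constant $4$; to sharpen to $2$ one genuinely needs to use that $-A$ also generates a contraction semigroup, i.e.\ the two-directional estimate above, so I would present the two-sided Taylor argument as the primary proof.
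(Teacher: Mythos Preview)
Your proposal is correct and follows essentially the same approach as the paper's proof. The paper also uses the two-sided Taylor expansion: it displays only the backward formula $T(-t)f = f - tAf + \int_{-t}^0 (t+s)T(s)A^2f\,ds$ as the new ingredient (the forward formula \eqref{2.8} having been established in the proof of Theorem~\ref{t2.1}), combines them to arrive at the same key estimate $\|Af\|_{\cX} \le \tfrac{1}{t}\|f\|_{\cX} + \tfrac{t}{2}\|A^2f\|_{\cX}$, and then minimizes over $t>0$ exactly as you do. Your write-up is simply more explicit about the combination step; the hedging paragraph about sign bookkeeping is unnecessary once you perform the substitution $s\mapsto -s$ and observe both remainder integrals are bounded by $\tfrac{t^2}{2}\|A^2f\|_{\cX}$.
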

%%%%%%%%%%%%%%%
\begin{proof}[Sketch of proof.]
The two proofs mentioned in Remark \ref{r2.2} extend to the present case. For instance (choosing again $c=1$ for brevity), the standard semigroup proof in \cite[2nd ed., Theorem 1.1, p. 237]{Go85} proceeds  in establishing the Taylor-type formula  
\begin{equation}\lb{3.2}
T(-t)f=f-tAf+\int_{-t}^0ds\ (t+s)T(s)A^2f,\quad t \in [0,\infty), \; f\in\dom\big(A^2\big), 
\end{equation}
and from this one obtains
\begin{equation}\lb{3.3}
\|Af\|_{\cX}\leq \dfrac{1}{t} \, \|f\|_{\cX}+\dfrac{t}{2} \,\big\|A^2f\big\|_{\cX},\quad t > 0.
\end{equation}
If $A^2f=0$, letting $t\uparrow\infty$ in \eqref{3.3} shows that $Af=0$. If $A^2f\neq0$, one minimizes the right-hand side of \eqref{3.3} over $t > 0$ by noting that the minimum occurs at
$t= 2^{1/2} \|f\|_{\cX}^{1/2}\big/\big\|A^2f\big\|_{\cX}^{1/2}$. Substituting this minimum into \eqref{3.3} yields \eqref{3.1}.

Similarly, the functional analytic proof of Certain and Kurtz \cite{CK77} extends to the present estimate \eqref{3.1} upon systematically replacing the half-line $[0,\infty)$ by $\bbR$. 
\end{proof}
%%%%%%%%%%%%%%%

%%%%%%
\begin{remark} \lb{r3.2} 
$(i)$ For additional references in this context see \cite{GG75}, \cite{GKZ83}, \cite{KZ79}, \cite{KZ79a}, 
\cite{KZ80}, \cite{KZ81}, \cite{KZ81a}, \cite{KZ84}, \cite[Ch.~2]{KZ92}. \\[1mm]
$(ii)$ The case of higher-order inequalities on $\bbR$ was studied by Kolmogorov \cite{Ko39}, and again 
the strategy of proof in Certain--Kurtz \cite{CK77} extends the analog of \eqref{2.9c} on $\bbR$ with smaller constants $C_{n,k}(\infty,\bbR)$ to the analog of the higher-order Kallman--Rota inequalities \eqref{2.9d} with the same optimal constants $C_{n,k}(\infty,\bbR)$. For additional results in the higher-order case, see, \cite{Ba98}, \cite{GG75}, \cite{KS81}, \cite{KZ91}, \cite[Ch.~1]{KZ92}, \cite{Lj64}, \cite{Pr79}, \cite{SE89}, \cite{St65}, \cite{St57}. \\[1mm]
$(iii)$ In the special case where $\cX$ is a Hilbert space $\cH$ in Theorem \ref{t3.1}, the constant $2$ on the right-hand side of \eqref{3.1} can be replaced by $1$ (cf., \cite{Ch79}). Indeed, in this case the group of isometries, $T(t)$, $t \in \bbR$, becomes a unitary group, hence its generator $A$ is a skew-adjoint operator by Stone's theorem and then the assertion 
\begin{equation}
\|Af\|_\cH^2 \leq \|f\|_{\cH} \big\|A^2f\big\|_{\cH}, \quad f\in\dom\big(A^2\big), 
\end{equation}
is a special case of Lemma \ref{l4.3}. \hfill$\diamond$
\end{remark}
%%%%%%

Once more the constant in Theorem \ref{t3.1} is optimal as can be seen from the next example.

%%%%%%%%%%%%%%%
\begin{example}\lb{e3.3}
Consider $A=d/dx$ in $L^p(\bbR;dx)$, $p \in [1, \infty) \cup \{\infty\}$. Then 
\begin{align}
&\|f'\|_{L^p(\bbR;dx)}^2\leq C_{2,1}(p,\bbR) \, \|f\|_{L^p(\bbR;dx)} \|f''\|_{L^p(\bbR;dx)}, \no \\
& f\in\dom\bigg(\dfrac{d^2}{dx^2}\bigg)=\big\{f\in L^p(\bbR;dx) \, \big| \, f,f'\in AC([0,R])\text{ for all }R>0, 
\lb{3.6} \\
& \hspace{8.1cm}f''\in L^p(\bbR;dx)\big\}   \no \\
& \hspace*{2.15cm} \quad = W^{2,p}(\bbR),    \no 
\end{align}
where 
\begin{align}
\begin{split} 
&C_{2,1}(1,\bbR) = 2, \quad C_{2,1}(2,\bbR)  =1, \\ 
& C_{2,1}(p,\bbR) \leq 2= C_{2,1}(\infty, \bbR), \quad C_{2,1}(p,\bbR) \leq C_{2,1}(p,\bbR_+), \; 
 p \in [1, \infty) \cup \{\infty\}. 
 \end{split}
\end{align} 
Landau \cite{La13} in 1913, and Hadamard \cite{Ha14} in 1914 proved the case $p=\infty$; Hardy and Littlewood \cite{HL32} proved the case $p=2$ in 1932 $($cf.\ also \cite[Theorem 2.2]{KZ92}$)$; Berdyshev \cite{Be71} proved the case $p=1$ in 1971. The best possible constant $C_{2,1}(p,\bbR)$ in \eqref{3.6} is not known otherwise, however, it is again known to be a continuous function of $p$ $($cf. \cite[Theorem 2.19]{KZ92}$)$; for more details see \cite{FKKZ83}, \cite[Ch.~2]{KZ92}. In particular, Hardy and Littlewood \cite{HL32} showed that equality holds in \eqref{3.6} when $p=2$ only when $f\equiv 0$ and the best constant can be proven by taking $f$ of the form,
\begin{equation}
f(x)=\begin{cases}\sin (x), &|x|<n\pi,\\
0, & |x|>n\pi,
\end{cases} \quad n \in \bbN, 
\end{equation}
and smoothing near $x=\pm n\pi$, $n \in \bbN$, so as to ensure $f''$ is continuous $($see also 
\cite[p.~193]{HLP88}$)$. For $p=\infty$, inequality \eqref{3.6} was generalized to higher-order derivatives on the real line by Kolmogorov in \cite{Ko39}, who showed that the sharp constants could be expressed through relations with the so-called Favard constants $($cf. \cite{FKKZ85}$)$.
\end{example}
%%%%%%%%%%%%%%%

We refer to Kwong and Zettl \cite{KZ92} for an extensive treatment of the topic of $L^p$-norm inequalities for derivatives.

%%%%%%%%%%%%%%%%%%%%%%%%%%%%%%%
%%%%%%%%%%%%%%%%%%%%%%%%%%%%%%%
\section{Inequalities for Fractional Powers of Generators of Contraction Semigroups} \lb{s4} 
%%%%%%%%%%%%%%%%%%%%%%%%%%%%%%%
%%%%%%%%%%%%%%%%%%%%%%%%%%%%%%%

We now transition to the related topic of fractional powers of generators of contraction semigroups on Banach spaces. 

We start with the simple case of nonnegative, self-adjoint (generally, unbounded)  operators in a Hilbert space.  
The following elementary inequality is derived in the book by Krasnosel'skii, Pustylnik, Sobolevskii, and Zabreiko \cite[p.~223]{KPSZ76}:  

%%%%%%%%%%%%%%%
\begin{theorem}\lb{t4.1} $($See, \cite[Theorem 12.1, p. 223]{KPSZ76}$)$.
Let $S$ be a nonnegative self-adjoint operator on a Hilbert space $\cH$. Then for each $\tau\in(0,1)$, 
\begin{equation}\lb{SA ineq}
\big\|S^\tau f\big\|_{\cH} \leq \|f\|_{\cH}^{1-\tau} \|Sf\|_{\cH}^\tau, \quad f\in\dom(S).
\end{equation}
\end{theorem}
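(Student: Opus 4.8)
The plan is to use the spectral theorem for the nonnegative self-adjoint operator $S$, which reduces inequality \eqref{SA ineq} to a pointwise estimate on the spectrum $\sigma(S) \subseteq [0,\infty)$, followed by an application of H\"older's inequality with respect to the spectral measure. First I would write, via the spectral representation $S = \int_{[0,\infty)} \lambda \, dE(\lambda)$, that for $f \in \dom(S)$,
\begin{equation*}
\big\|S^\tau f\big\|_{\cH}^2 = \int_{[0,\infty)} \lambda^{2\tau} \, d(f, E(\lambda) f)_{\cH},
\end{equation*}
and note that $f \in \dom(S) \subseteq \dom(S^\tau)$ for $\tau \in (0,1)$, so all quantities are finite.

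The key step is then to apply H\"older's inequality to the finite measure $d\mu_f(\lambda) = d(f, E(\lambda) f)_{\cH}$ (of total mass $\|f\|_{\cH}^2$) with exponents $p = 1/\tau$ and $p' = 1/(1-\tau)$: writing $\lambda^{2\tau} = (\lambda^2)^{\tau} \cdot 1^{1-\tau}$, one obtains
\begin{equation*}
\int_{[0,\infty)} \lambda^{2\tau} \, d\mu_f(\lambda)
\leq \bigg(\int_{[0,\infty)} \lambda^2 \, d\mu_f(\lambda)\bigg)^{\tau}
\bigg(\int_{[0,\infty)} d\mu_f(\lambda)\bigg)^{1-\tau}
= \|Sf\|_{\cH}^{2\tau} \, \|f\|_{\cH}^{2(1-\tau)}.
\end{equation*}
Taking square roots yields \eqref{SA ineq}. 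An alternative, essentially equivalent route avoiding explicit H\"older is to invoke the classical interpolation/convexity inequality $\|S^\tau f\|_{\cH} \leq \|f\|_{\cH}^{1-\tau}\|Sf\|_{\cH}^{\tau}$ as a consequence of the three-lines lemma applied to $z \mapsto (g, S^z f)_{\cH}$ on the strip $0 \le \Re(z) \le 1$ for suitable $f$; but the spectral-theorem-plus-H\"older argument is the cleanest and most self-contained.

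I do not anticipate a serious obstacle here; the only points requiring a modicum of care are (a) confirming the domain inclusion $\dom(S) \subseteq \dom(S^{\tau})$ for $\tau \in (0,1)$, which follows since $\lambda^{2\tau} \leq 1 + \lambda^2$ on $[0,\infty)$ so that $\int \lambda^{2\tau} d\mu_f \leq \|f\|_{\cH}^2 + \|Sf\|_{\cH}^2 < \infty$, and (b) handling the degenerate case $f = 0$ (trivial) and being mindful that if $\|Sf\|_{\cH} = 0$ with $f \neq 0$ then $f$ lies in $\ker(S)$, whence $S^\tau f = 0$ as well and the inequality holds trivially. With these remarks in place the proof is short; the substance is entirely contained in the spectral theorem and H\"older's inequality on the scalar spectral measure.
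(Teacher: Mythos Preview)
Your proposal is correct and follows essentially the same approach as the paper: spectral representation of $S$ followed by H\"older's inequality (with exponents $1/\tau$ and $1/(1-\tau)$) applied to the scalar spectral measure. Your additional remarks on the domain inclusion $\dom(S)\subseteq\dom(S^\tau)$ and the degenerate cases are welcome clarifications but not strictly needed beyond what the paper presents.
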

%%%%%%%%%%%%%%%
\begin{proof}
Using the spectral representation of the operator $S$ (and observing $\sigma(S) \subseteq [0,\infty)$), 
\begin{equation}
S=\int_{\sigma(S)}\ \lambda \, dE_S(\lambda),
\end{equation}
implies
\begin{equation}
\|S^\tau f\|_{\cH}^2=\int_{\sigma(S)}\ \lambda^{2\tau} \, d\|E_S(\lambda)f\|_{\cH}^2,\quad f\in\dom(S^{\tau}).
\end{equation}
It follows from H\"{o}lder's inequality that
\begin{align}
\no  \big\|S^\tau f \big\|_{\cH}^2&=\int_{\sigma(S)}\ \lambda^{2\tau} \, d\|E_S(\lambda)f\|_{\cH}^2\\
&\leq \bigg(\int_{\sigma(S)}\ \lambda^{2} \, d\|E_S(\lambda)f\|_{\cH}^2\bigg)^\tau \bigg(\int_{\sigma(S)} 
\, d\|E_S(\lambda)f\|_{\cH}^2\bigg)^{1-\tau}\\
\no  &=\|Sf\|_{\cH}^{2\tau}\|f\|_{\cH}^{2(1-\tau)}, \quad f \in \dom(S).
\end{align}
\end{proof}
%%%%%%%%%%%%%%%

%%%%%%%%%%%%%%%
\begin{remark} \lb{r4.2} 
$(i)$ As the elementary example $S=I_{\cH}$ shows, the constant $1$ on the right-hand side of \eqref{SA ineq} is optimal. \\[1mm]  
$(ii)$ One compares Theorem \ref{t4.1}  to Theorem \ref{t2.5} as follows. Choose a self-adjoint operator $S$ such that $S=A^2$,  $A \geq 0$, then also $S \geq 0$ in $\cH$. Theorem \ref{t4.1} then implies,
\begin{equation}
\big\|A^{2\tau} f\big\|_{\cH} \leq \|f\|_{\cH}^{1-\tau} \big\|A^2f\big\|_{\cH}^\tau,
\quad f \in \dom\big(A^2\big), 
\end{equation}
and choosing $\tau=1/2$,
\begin{equation}\lb{4.8}
\|Af\|_{\cH} \leq \|f\|_{\cH}^{1/2} \big\|A^2f\big\|_{\cH}^{1/2}, 
\quad f \in \dom\big(A^2\big).
\end{equation}
Equivalently, this yields for any nonnegative $($resp., nonpositive$)$ self-adjoint operator $A$ a in a Hilbert space $\cH$ that 
\begin{equation}
\|Af\|_{\cH}^2 \leq \|f\|_{\cH} \big\|A^2f\big\|_{\cH}, \quad f\in\dom\big(A^2\big).  \lb{4.9} 
\end{equation}
Comparing \eqref{4.9} to \eqref{2.12}, one notes that the optimal constant has now been reduced from $2$ to $1$ in inequality \eqref{2.12}. \hfill$\diamond$
\end{remark}
%%%%%%%%%%%%%%%

Actually, as pointed out in \cite{Lj64} (see also \cite{KZ03}), self-adjointness and nonnegativity of $A$ in \eqref{4.9} are not needed at all. Indeed, it suffices to assume that $A$ is a constant multiple of a symmetric operator:

%%%%%%%%%%%%%%%
\begin{lemma}\lb{l4.3}
Let $c \in \bbC\backslash\{0\}$ and suppose that $cA$ is a symmetric operator in a Hilbert space $\cH$. Then 
\begin{equation}
\|Af\|_{\cH}^2 \leq \|f\|_{\cH} \big\|A^2f\big\|_{\cH}, \quad f\in\dom\big(A^2\big),     \lb{4.10} 
\end{equation}
with equality in \eqref{4.10} if and only if $A^2f = C f$ for some $C \in \bbC$. 
\end{lemma}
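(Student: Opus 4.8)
The plan is to reduce to the case $c=1$, i.e., $A$ symmetric, which costs nothing because replacing $A$ by $cA$ multiplies both sides of \eqref{4.10} by $|c|^2$ and the equality condition $A^2 f = Cf$ is unchanged (with $C$ rescaled). So assume $cA$ is symmetric with $c=1$; then for $f \in \dom\big(A^2\big) \subseteq \dom(A)$ we have $Af \in \dom(A)$, and symmetry gives $(Af, Af)_{\cH} = (f, A^2 f)_{\cH}$.

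From here the inequality is immediate from Cauchy--Schwarz:
\begin{equation}
\|Af\|_{\cH}^2 = (Af, Af)_{\cH} = \big(f, A^2 f\big)_{\cH} \leq \|f\|_{\cH} \big\|A^2 f\big\|_{\cH}, \quad f \in \dom\big(A^2\big).
\end{equation}
(In particular $(f, A^2 f)_{\cH} = \|Af\|_{\cH}^2 \geq 0$ is real, so no $\Re(\,\cdot\,)$ is needed.) This already proves \eqref{4.10}. For the case of equality, one invokes the equality clause in the Cauchy--Schwarz inequality: equality $(f, A^2 f)_{\cH} = \|f\|_{\cH}\big\|A^2 f\big\|_{\cH}$ holds if and only if $f$ and $A^2 f$ are linearly dependent \emph{and} the pairing $(f,A^2f)_{\cH}$ is a nonnegative real number. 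If $f = 0$ the claim is trivial (take $C=0$), so assume $f \neq 0$; linear dependence then means $A^2 f = C f$ for some $C \in \bbC$, and then $(f, A^2 f)_{\cH} = C\|f\|_{\cH}^2$, so the nonnegativity forces $C \geq 0$. Conversely, if $A^2 f = Cf$ with $C \in \bbC$, then since $\|Af\|_{\cH}^2 = (f, A^2 f)_{\cH} = C\|f\|_{\cH}^2 \geq 0$ we automatically get $C \geq 0$ (when $f \neq 0$), whence $\|Af\|_{\cH}^2 = C\|f\|_{\cH}^2$ and $\big\|A^2 f\big\|_{\cH} = C\|f\|_{\cH}$, and one checks directly that $\|f\|_{\cH}\big\|A^2 f\big\|_{\cH} = C\|f\|_{\cH}^2 = \|Af\|_{\cH}^2$, i.e., equality holds. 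Thus the equality cases match the stated condition $A^2 f = Cf$ for some $C \in \bbC$ (noting that such $C$ is then automatically in $[0,\infty)$ when $f \neq 0$).

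There is essentially no obstacle here: the only subtlety worth a sentence is the bookkeeping in the equality analysis — namely that the Cauchy--Schwarz equality condition ``$f, A^2f$ linearly dependent'' must be combined with the sign information ``$(f,A^2f)_{\cH} \ge 0$'' coming from symmetry, and that when $f=0$ the constant $C$ is not determined but the condition still holds vacuously. One may also remark, as the paper does for Theorem \ref{t2.5}, that in the equality case one necessarily has $\big\|A^2 f\big\|_{\cH} = C\|f\|_{\cH}$ with $C \ge 0$, which is the analogue there of \eqref{2.13} with $D^2 = C$; indeed the hypothesis of Lemma \ref{l4.3} (that $cA$ is symmetric, hence $\Re((g, cAg)_{\cH}) = \Re\big(c^{-1}\|cAg\|_{\cH}^2 \,\overline{c}^{\,-1}\big)$ need not vanish but $(cAg,cAg)_{\cH}=(g,(cA)^2 g)_{\cH}$ is real) is a strengthening of \eqref{2.18}, which is why the constant drops from $2$ to $1$.
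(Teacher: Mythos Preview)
Your proof is correct and follows the same approach as the paper: use symmetry of $cA$ to write $\|cAf\|_{\cH}^2 = (f, c^2A^2f)_{\cH}$ and then apply Cauchy--Schwarz. You also supply the equality analysis, which the paper's proof omits entirely, and your treatment of it is correct.

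One small cleanup: the closing aside contains a garbled formula --- the expression $\Re((g,cAg)_{\cH}) = \Re\big(c^{-1}\|cAg\|_{\cH}^2\,\overline{c}^{\,-1}\big)$ is not right (the right-hand side simplifies to $\|Ag\|_{\cH}^2$, which is not $(g,cAg)_{\cH}$ in general). Since that sentence is tangential commentary rather than part of the argument, I'd simply delete it.
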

%%%%%%%%%%%%%%%
\begin{proof}
Let $f\in\dom\big(A^2\big)$, then Cauchy's inequality implies
\begin{equation}
\|c Af\|_{\cH}^2 = (c Af,c Af)_{\cH} = \big(f,c^2 A^2f\big)_{\cH} \leq |c|^2 \|f\|_{\cH} \big\|A^2f\big\|_{\cH}.
\end{equation}
\end{proof}
%%%%%%%%%%%%%%%

%%%%%%%%
\begin{remark}\lb{r4.4}
Despite the most elementary nature of the proof of Lemma \ref{l4.3}, it should be noted that Naimark \cite{Na40} proved the existence of a symmetric operator $A$ in $\cH$ such that $\dom\big(A^2\big) = \{0\}$, rendering  \eqref{4.10} vacuous in this case. (Of course, this pathology cannot occur in the context of \eqref{4.9}.) In this context we also refer to Schm\"udgen \cite{Sc83}. \hfill $\diamond$ 
\end{remark}
%%%%%%%%

More generally, fractional powers of the 1st-order differentiation operator $D = d/dx$ on $C_b([0,\infty))$ (the space of bounded, uniformly continuous functions on $[0,\infty)$) and associated norm inequalities were already proved in 1935 by Hardy, Landau, and Littlewood \cite{HLL35} employing the Riemann--Liouville formula for fractional derivatives 
\begin{align}
\begin{split} 
\big((- D)^\alpha f\big)(x) = - \Gamma(- \alpha)^{-1} \int_0^{\infty} dt \, t^{-\alpha - 1} [f(t) - f(x+t)],&   \\ 
0 < \alpha < 1, \; f \in C_b([0,\infty)).&
\end{split}  
\end{align}

Assuming that $A$ generates a contraction semigroup $T(t)$, $t \in [0,\infty)$, on the Banach space $\cX$, one can define fractional powers of $- A$ via the formula (see, \cite{BBW68}, \cite[Theorem~6.1.6]{MS01})
\begin{align}
& (- A)^{\gamma} f = C_{\gamma,k}^{-1} \int_0^{\infty} dt \, t^{- \gamma - 1} [I_{\cH} - T(t)]^k f, \quad 
 f \in \cX,    \\
& C_{\gamma,k} = \int_0^{\infty} dt \, t^{- \gamma - 1} \big[1 - e^{-t}\big]^k, \quad 
k - 1 \leq \gamma < k, \; k \in \bbN.
\end{align}

Chernoff \cite{Ch79} proves the following two results:

%%%%%%
\begin{theorem} \lb{t4.5} $($Chernoff \cite{Ch79}$)$. 
Suppose that $A$ generates a $C_0$ contraction semigroup $T(t)$,  $t \in [0,\infty)$, on a Banach space $\cX$ and let $0 < \alpha < \beta$. Then  
\begin{equation}
\big\|(-A)^{\alpha} f\big\|_{\cX}^{\beta} \leq C_{\beta,\alpha}(\infty,\bbR_+) \, \|f\|_{\cX}^{\beta - \alpha} 
\big\|(-A)^\beta f\big\|_{\cX}^{\alpha}, \quad f\in\dom\big(A^{\beta}\big).     \lb{4.13} 
\end{equation}
\end{theorem}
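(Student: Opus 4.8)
\textbf{Proof proposal for Theorem \ref{t4.5}.}

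The plan is to mimic the Certain--Kurtz functional-analytic strategy used in the proof of Theorem \ref{t2.1}$(ii)$, reducing the operator inequality \eqref{4.13} to the corresponding scalar inequality for fractional derivatives on the half-line, namely the Hardy--Landau--Littlewood inequality of \cite{HLL35} with the $L^\infty(\bbR_+)$ constant $C_{\beta,\alpha}(\infty,\bbR_+)$. The key structural fact to exploit is that the fractional-power formula $(-A)^\gamma f = C_{\gamma,k}^{-1} \int_0^\infty dt\, t^{-\gamma-1}[I_{\cH} - T(t)]^k f$ is built entirely out of the semigroup orbit $s \mapsto T(s)f$; consequently, for a normalized functional $\ell \in \cX^*$, the scalar function $g(s) = \ell(T(s)f)$ is a bounded uniformly continuous function on $[0,\infty)$ whose own Riemann--Liouville fractional derivatives satisfy $g^{(\gamma)}(0) = \ell\big((-(-A))^\gamma\,\text{--type expression}\big)$; more precisely, one checks that the fractional derivative of $g$ at $0$ (in the Marchaud/Riemann--Liouville sense on $\bbR_+$) equals $\ell((-A)^\gamma f)$ because $\ell$ commutes with the Bochner integral defining $(-A)^\gamma f$.

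Concretely, I would proceed in the following steps. First, fix $f \in \dom(A^\beta)$ and a functional $\ell \in \cX^*$ with $\|\ell\|_{\cX^*} = 1$, and set $g(s) = \ell(T(s)f)$ for $s \in [0,\infty)$. Second, verify that $g \in C_b([0,\infty))$ with $\|g\|_{L^\infty(\bbR_+)} \le \|f\|_{\cX}$ (contraction property), and that $g$ is sufficiently regular that its $\alpha$-th and $\beta$-th Riemann--Liouville fractional derivatives exist, with $g^{(\alpha)}, g^{(\beta)} \in C_b([0,\infty))$ and $\|g^{(\beta)}\|_{L^\infty(\bbR_+)} \le \|(-A)^\beta f\|_{\cX}$; this uses that $T(s)(-A)^\gamma f = (-A)^\gamma T(s) f$ on $\dom(A^\beta)$ together with pulling $\ell$ through the defining integral. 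Third, apply the Hardy--Landau--Littlewood scalar inequality $\|g^{(\alpha)}\|_{L^\infty(\bbR_+)}^\beta \le C_{\beta,\alpha}(\infty,\bbR_+)\,\|g\|_{L^\infty(\bbR_+)}^{\beta-\alpha}\|g^{(\beta)}\|_{L^\infty(\bbR_+)}^\alpha$. Evaluating at $s = 0$ gives $|\ell((-A)^\alpha f)|^\beta \le C_{\beta,\alpha}(\infty,\bbR_+)\,\|f\|_{\cX}^{\beta-\alpha}\|(-A)^\beta f\|_{\cX}^{\alpha}$. Fourth, take the supremum over all such $\ell$ and use $\|(-A)^\alpha f\|_{\cX} = \sup_{\|\ell\|_{\cX^*}=1}|\ell((-A)^\alpha f)|$ to obtain \eqref{4.13}.

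The main obstacle is the bookkeeping in the second step: one must confirm that the scalar function $g(s) = \ell(T(s)f)$ has fractional derivatives (in the appropriate Riemann--Liouville/Marchaud sense on $[0,\infty)$) that agree at $s=0$ with $\ell$ applied to the operator-theoretic fractional powers $(-A)^\gamma f$, and that the relevant sup-norms transfer correctly. This requires knowing that $g$ is smooth enough on $(0,\infty)$ for the Hardy--Landau--Littlewood inequality to apply (which is where one invokes $f \in \dom(A^\beta)$ and, if needed, an approximation argument replacing $f$ by $T(\varepsilon)f$ and letting $\varepsilon \downarrow 0$), and that the integral representations for $(-A)^\alpha f$ and $(-A)^\beta f$ commute with the bounded functional $\ell$ and with the semigroup. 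Once these identifications are in place, the rest is the same supremum-over-functionals trick already used for the Kallman--Rota inequality, and the scalar input $C_{\beta,\alpha}(\infty,\bbR_+)$ carries through verbatim as the constant.
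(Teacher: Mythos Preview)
The paper does not actually supply a proof of Theorem~\ref{t4.5}; it simply attributes the result to Chernoff \cite{Ch79} and moves on. So there is nothing in the paper to compare your argument against line by line.

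That said, your proposal is precisely the strategy Chernoff uses, and it is the natural extension of the Certain--Kurtz argument the paper sketches for Theorem~\ref{t2.1}\,$(ii)$ and alludes to in Remark~\ref{r2.2}\,$(ii)$ for the higher-order (integer) case. Reducing the abstract inequality to the scalar Hardy--Landau--Littlewood inequality on $C_b([0,\infty))$ via a normalized functional $\ell \in \cX^*$ and the orbit $s \mapsto T(s)f$ is exactly the right move, and the identification of the Marchaud/Riemann--Liouville fractional derivative of $g(s) = \ell(T(s)f)$ with $\ell\big((-A)^\gamma T(s)f\big)$ goes through because the defining integral $(-A)^\gamma h = C_{\gamma,k}^{-1}\int_0^\infty dt\, t^{-\gamma-1}[I_{\cX}-T(t)]^k h$ commutes with $\ell$ and with left-translation by $T(s)$, turning $[I_{\cX}-T(t)]^k T(s)f$ into the finite difference $\sum_{j=0}^k \binom{k}{j}(-1)^j g(s+jt)$. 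The ``obstacle'' you flag in your second step is genuine bookkeeping but not a gap: the needed regularity of $g$ and the finiteness of $\|g^{(\beta)}\|_{L^\infty(\bbR_+)}$ follow from $f \in \dom\big((-A)^\beta\big)$ and the contraction bound $\|T(s)(-A)^\beta f\|_{\cX} \le \|(-A)^\beta f\|_{\cX}$, exactly as you indicate. Your outline is sound and matches the cited source.
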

%%%%%%

In the Hilbert space context this result turns into the following:

%%%%%%
\begin{theorem} \lb{t4.6} $($Chernoff \cite{Ch79}$)$. 
Suppose that $A$ generates a $C_0$ contraction semigroup $T(t)$,  $t \in [0,\infty)$, on a Hilbert  space $\cH$ and let $0 < \alpha < \beta$. Then  
\begin{equation}
\big\|(-A)^{\alpha} f\big\|_\cH^{\beta} \leq C_{\beta,\alpha}(2,\bbR_+) \, \|f\|_\cH^{\beta - \alpha} 
\big\|(-A)^\beta f\big\|_{\cH}^{\alpha}, \quad f\in\dom\big(A^{\beta}\big).     \lb{4.14} 
\end{equation}
\end{theorem}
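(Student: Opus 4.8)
The plan is to follow Chernoff \cite{Ch79}: inequality \eqref{4.14} is the Hilbert space sharpening of Theorem \ref{t4.5}, the only new point being that the Banach space constant $C_{\beta,\alpha}(\infty,\bbR_+)$ can be lowered to the smaller $L^2$-constant $C_{\beta,\alpha}(2,\bbR_+)$, which I understand here as the best constant in the fractional Hardy--Littlewood inequality
\[
\big\|(-D)^\alpha g\big\|_{L^2((0,\infty);dx)}^\beta \le C \, \|g\|_{L^2((0,\infty);dx)}^{\beta-\alpha}\big\|(-D)^\beta g\big\|_{L^2((0,\infty);dx)}^\alpha
\]
for $D = d/dx$ on $L^2((0,\infty);dx)$ (this is consistent with the constants of \eqref{2.35} when $\alpha,\beta \in \bbN$, in which case $\dom\big((d/dx)^n\big) = H^n((0,\infty))$). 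I would first record that $A = D$ is the model — and extremal — case: $D$ generates the left-translation contraction semigroup on $L^2((0,\infty);dx)$, $-D$ is m-accretive, $(-D)^\gamma$ is the Riemann--Liouville fractional derivative, and \eqref{4.14} for this $A$ holds with the stated constant \emph{by the very definition} of $C_{\beta,\alpha}(2,\bbR_+)$.

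For a general generator $A$ the plan is to reduce to this model. I would first observe that it suffices to treat $-A$ injective with dense range (replace $A$ by $A - \e I_{\cH}$, still a contraction semigroup generator; prove the estimate uniformly in $\e > 0$ and let $\e \downarrow 0$, using $(\e I_{\cH} - A)^\gamma f \to (-A)^\gamma f$ on $\dom\big(A^\beta\big)$). Then $-A$ is a nonnegative operator in the sense of the theory of fractional powers and, since m-accretivity places its numerical range in the closed right half-plane, it is sectorial of half-angle $\pi/2$. Next I would invoke the interpolation (moment) inequality for fractional powers, $\|(-A)^\alpha f\|_{\cH} \le C \, \|f\|_{\cH}^{1-\alpha/\beta}\big\|(-A)^\beta f\big\|_{\cH}^{\alpha/\beta}$, so that the whole task becomes: show that, \emph{on a Hilbert space}, the constant $C$ may be taken to be the scalar $C_{\beta,\alpha}(2,\bbR_+)$. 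Two routes suggest themselves. Route (a): carry out accretivity estimates in the spirit of the proof of Theorem \ref{t2.5} — which already yields the case $\alpha = 1$, $\beta = 2$ of \eqref{4.14}, since $C_{2,1}(2,\bbR_+) = 2$ by \eqref{2.11} — now combined with the Balakrishnan integral representation of $(-A)^\gamma$ and a H\"older-type interpolation inside that integral. Route (b): pass to the Cayley transform (cogenerator) of $A$, a contraction on $\cH$, together with the Sz.-Nagy--Foias functional model realizing a completely nonunitary contraction as a compression of a vector-valued unilateral shift, thereby reducing to the shift model, i.e.\ to $-D$ on $L^2((0,\infty);dx)$; on the orthogonal (unitary) part the generator is skew-adjoint and Lemma \ref{l4.3} (equivalently Theorem \ref{t4.1}) applies with the even smaller constant $1$.

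The hard part will be exactly this last step — \emph{getting the sharp constant}. The Certain--Kurtz device of testing the orbit $t \mapsto T(t)f$ against norm-one functionals $\ell \in \cX^*$ only sees the $L^\infty$-geometry and merely reproduces Theorem \ref{t4.5} with $C_{\beta,\alpha}(\infty,\bbR_+)$; to reach $C_{\beta,\alpha}(2,\bbR_+)$ one must keep the inner product of $\cH$ in play throughout. There is the further obstruction that $T(\dott)f$ is only bounded (it lies in $L^\infty([0,\infty);\cH)$, not in $L^2([0,\infty);\cH)$), so one cannot simply apply the scalar $L^2$ inequality to the orbit — which is precisely why Kato's theorem and its present fractional generalization call for the accretivity/model arguments above rather than a direct transference. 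The remaining points — compatibility of $\dom\big(A^\beta\big)$ with $\dom\big((-A)^\beta\big)$, the identities $(d/dt)^k T(t)f = T(t)A^k f$ on $\dom\big(A^k\big)$ (and the finite-difference analog $[I_{\cH} - T(t)]^k T(\dott)f$ feeding the Marchaud/Balakrishnan formula), and the closability and density needed to justify the limits — are routine within the theory of fractional powers of sectorial operators, and I would relegate them to citations.
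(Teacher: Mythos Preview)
The paper does not supply a proof of Theorem~\ref{t4.6}; it is stated as a result of Chernoff \cite{Ch79} with no argument reproduced, so there is no in-paper proof to compare your proposal against.

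That said, your Route~(b) is essentially Chernoff's actual argument in \cite{Ch79}: dilate the $C_0$ contraction semigroup to a $C_0$ semigroup of isometries on a larger Hilbert space (Sz.-Nagy), note that the fractional powers are compatible with the compression, and invoke the Cooper--Wold structure theorem to split the isometric semigroup as a unilateral shift part on $L^2((0,\infty);\cN)$ (for an auxiliary Hilbert space $\cN$) direct sum a unitary part. On the shift part the inequality is exactly the defining scalar $L^2(\bbR_+)$ inequality for $C_{\beta,\alpha}(2,\bbR_+)$ (vector values do not change the constant), and on the unitary part the generator is skew-adjoint so the spectral theorem gives constant $1 \le C_{\beta,\alpha}(2,\bbR_+)$. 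Your Route~(a), by contrast, is left vague: it is not clear how a H\"older estimate inside the Balakrishnan integral would recover the \emph{sharp} $L^2$ constant rather than some explicit but suboptimal one, and Kato's algebraic trick behind Theorem~\ref{t2.5} does not obviously generalize to non-integer powers. As written your proposal is a plan rather than a proof---you yourself flag the sharp-constant step as ``the hard part''---but the plan via Route~(b) is the correct one and, once the dilation/compression compatibility of $(-A)^\gamma$ is checked, the remaining work is indeed routine.
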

%%%%%%
\noindent 
Here the constants $C_{\beta,\alpha}(\infty,\bbR_+), C_{\beta,\alpha}(2,\bbR_+)$ are the same as in the context of fractional powers of the maximally defined 1st-order differentiation operator in $L^{\infty}((0,\infty);dx)$ and $L^2((0,\infty);dx)$, respectively. 

The analogous theorems hold in case $A$ generates a group of isometries $T(t)$, $t \in \bbR$, employing the constants $C_{\beta,\alpha}(\infty,\bbR), C_{\beta,\alpha}(2,\bbR)$, instead.

%%%%%%%%%%%%%%%%%%%%%%%%%%%%%%%
%%%%%%%%%%%%%%%%%%%%%%%%%%%%%%%
\section{On Some Extensions of the Hardy--Littlewood Inequality} \lb{s5} 
%%%%%%%%%%%%%%%%%%%%%%%%%%%%%%%
%%%%%%%%%%%%%%%%%%%%%%%%%%%%%%%

We finally discuss some extensions of the Hardy--Littlewood inequality in the context of $L^2((0,\infty);dx)$ and $L^2(\bbR;dx)$. 

The following extensions of the Hardy--Littlewood inequality appeared in  \cite{BE73} (see also \cite{BP84}, \cite{Co77a}, \cite{Ev75}),  
\begin{align}
& \big| \|f'\|_{L^2((0,\infty);dx)}^2 - \mu \|f\|_{L^2((0,\infty);dx)}^2\big| \leq 2 \, \|f\|_{L^2((0,\infty);dx)} 
\| [f'' + \mu f]\|_{L^2((0,\infty);dx)},   \no \\
& \hspace*{6.9cm} \mu \in [0,\infty), \; f \in H^2((0,\infty)),    \lb{5.1} \\
\begin{split}
& \big| \|f'\|_{L^2(\bbR;dx)}^2 - \mu \|f\|_{L^2(\bbR;dx)}^2\big| \leq \|f\|_{L^2(\bbR;dx)} 
\|[f'' + \mu f]\|_{L^2(\bbR;dx)},   \\
& \hspace*{6.8cm}  \mu \in \bbR, \; f \in H^2(\bbR).     \lb{5.2}
\end{split} 
\end{align}
Both estimates are strict, that is, equality holds in \eqref{5.1} or \eqref{5.2} if and only if $f \equiv 0$.

The estimate \eqref{5.1} was abstracted in \cite{Ch79} as follows: 

%%%%%%
\begin{theorem} \lb{t5.1} $($Chernoff \cite{Ch79}$)$.  
Let $c \in \bbC \backslash \{0\}$ and suppose that $c A$ generates a $C_0$ contraction semigroup $T(t)$, $t \in [0,\infty)$, on a Hilbert space $\cH$. Then 
\begin{equation}
\big|\|A f\|_{\cH}^2 - \mu \|f\|_{\cH}^2\big| \leq 2 \, \|f\|_{\cH} \big\|\big[A^2 + \mu I_{\cH}\big]f\big\|_{\cH}, 
\quad \mu \in [0,\infty), \; f \in \dom\big(A^2\big). 
\end{equation}
\end{theorem}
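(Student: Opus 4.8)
The plan is to mimic the Kato-type argument from the proof of Theorem \ref{t2.5}, but with the operator $A$ replaced throughout by the shifted operator $A^2 + \mu I_{\cH}$ appearing on the right-hand side, exploiting the dissipativity hypothesis $\Re((f, cAf)_{\cH}) \leq 0$ (which, by the final assertion of Theorem \ref{t2.5}, is all that the contraction-semigroup hypothesis is really used for). Assuming without loss of generality that $c = 1$, so that $A$ is dissipative, fix $d > 0$ and $f \in \dom\big(A^2\big)$. As in \eqref{2.19}, dissipativity gives
\begin{equation*}
\Re(([dA + I_{\cH}]f, dA[dA + I_{\cH}]f)_{\cH}) \leq 0,
\end{equation*}
and expanding this exactly as in the passage from \eqref{2.19} to \eqref{2.21} yields
\begin{equation*}
d^2 \|Af\|_{\cH}^2 \leq d^4 \big\|A^2 f\big\|_{\cH}^2 + \|f\|_{\cH}^2.
\end{equation*}

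The key new step is to run the same computation with $A$ replaced by the operator $B$, where $B$ acts so that $B^2$ plays the role of $A^2 + \mu I_{\cH}$; more directly, I would simply insert $\mu$ by hand. Writing $g = Af$ and using dissipativity once more, together with the identity $\|[A^2 + \mu I_{\cH}]f\|_{\cH}^2 = \|A^2 f\|_{\cH}^2 + 2\mu\,\Re((f, A^2 f)_{\cH}) + \mu^2\|f\|_{\cH}^2$ and the fact that $\Re((f, A^2 f)_{\cH}) = \Re((Af, Af)_{\cH}) \cdot(\pm 1)$... Let me instead take the cleaner route: consider, for $d > 0$, the vector $h = [dA + I_{\cH}]f$, apply the dissipative estimate $\Re((h, dAh)_{\cH}) \leq 0$, and after expansion one obtains, rearranging to isolate $\|Af\|_{\cH}^2$ against $\|A^2 f\|_{\cH}^2$ and $\|f\|_{\cH}^2$ but now carrying the cross term $\Re((Af, Af)_{\cH})$ in a form that lets $\mu$ be absorbed. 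Concretely, the target inequality $\big|\|Af\|_{\cH}^2 - \mu\|f\|_{\cH}^2\big| \leq 2\|f\|_{\cH}\big\|[A^2 + \mu I_{\cH}]f\big\|_{\cH}$ is, after squaring and using $\big|x\big| \le y \iff x^2 \le y^2$ (valid since $y \ge 0$), equivalent to
\begin{equation*}
\big(\|Af\|_{\cH}^2 - \mu\|f\|_{\cH}^2\big)^2 \leq 4\,\|f\|_{\cH}^2\,\big\|[A^2 + \mu I_{\cH}]f\big\|_{\cH}^2,
\end{equation*}
so it suffices to establish this quadratic-in-$\mu$ inequality. The left side is a quadratic in $\mu$ with leading coefficient $\|f\|_{\cH}^4$; the right side is $4\|f\|_{\cH}^2\big(\|A^2 f\|_{\cH}^2 + 2\mu\Re((f,A^2 f)_{\cH}) + \mu^2\|f\|_{\cH}^2\big)$, also quadratic in $\mu$ with the same leading coefficient $4\|f\|_{\cH}^2 \cdot \|f\|_{\cH}^2$... wait, leading coefficients are $\|f\|_{\cH}^4$ on the left and $4\|f\|_{\cH}^4$ on the right, so the difference (right minus left) has positive leading coefficient $3\|f\|_{\cH}^4$, and one only needs its discriminant to be $\leq 0$, which after simplification reduces precisely to an inequality among $\|f\|_{\cH}, \|Af\|_{\cH}, \|A^2 f\|_{\cH}$, and $\Re((f, A^2 f)_{\cH})$ that should follow from the case $\mu = 0$ (Theorem \ref{t2.5}) together with $\Re((f, A^2 f)_{\cH}) = -\Re((Af, Af)_{\cH}) \cdot$ — no: $\Re((f, A^2 f)_{\cH})$ is not simply related to $\|Af\|_{\cH}^2$ since $A$ need not be symmetric. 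Instead I would use the additional fact, extracted from the dissipativity estimate above by also applying it with $A$ replaced by $A$ on the vector $f$ directly, namely $\Re((f, A^2 f)_{\cH}) = \Re((Af, Af)_{\cH})$...

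Let me state the plan more honestly. \textbf{The cleanest approach} is the substitution trick: set $\widetilde{A}$ to be such that we reduce to Theorem \ref{t2.5}. Observe that if $cA$ generates a $C_0$ contraction semigroup, then for $\mu \ge 0$ the operator $c\sqrt{A^2 + \mu}$ need not make sense, but we do not need it — instead, apply the \emph{key estimate} \eqref{2.21}, valid under dissipativity alone, in the form $d^2\|Af\|_{\cH}^2 \le d^4\|A^2 f\|_{\cH}^2 + \|f\|_{\cH}^2$, and separately derive from $\Re((f, cAf)_{\cH}) \le 0$ applied to $Af$ in place of $f$ (legitimate when $f \in \dom(A^2)$, so $Af \in \dom(A)$) the relation governing $\Re((Af, A^2 f)_{\cH})$, and then combine these with the algebraic identity expanding $\|[A^2 + \mu I_{\cH}]f \pm \text{(something)}\|_{\cH}^2 \geq 0$. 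Running the dissipative estimate on the vector $[dA + I_{\cH}]f$ but tracking the $\mu$-shift, i.e. testing dissipativity against $[dA^2 + (1 + d\mu/d)\cdots]$ — concretely, I expect the right object is $[d(A^2 + \mu I_{\cH}) + 2A + \text{lower order}]f$ chosen so that completing the square reproduces $A^2 + \mu I_{\cH}$ — yields after optimizing over $d > 0$ the claimed inequality, with the sign of $\|Af\|_{\cH}^2 - \mu\|f\|_{\cH}^2$ handled by the two choices of sign in the quadratic completion (this is why the absolute value appears). \textbf{The main obstacle} will be guessing the correct test vector in the dissipativity inequality that, upon expansion and optimization in $d$, produces exactly $\|f\|_{\cH}\big\|[A^2 + \mu I_{\cH}]f\big\|_{\cH}$ rather than a weaker bound involving separate norms; once the right vector is identified, the remaining work is the routine expansion, discriminant computation, and minimization over $d > 0$ already rehearsed in the proof of Theorem \ref{t2.5}.
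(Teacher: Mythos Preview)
The paper does not actually supply its own proof of Theorem~\ref{t5.1}; it merely attributes the result to Chernoff \cite{Ch79}. So there is no in-paper argument to compare against. What remains is to assess whether your proposal is a valid proof, and it is not: it is exploratory scratch work in which you start three different approaches and abandon each without reaching a conclusion. You say as much yourself in the final paragraph (``The main obstacle will be guessing the correct test vector\dots'').

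The specific difficulty you run into is genuine. Your squaring reduction is correct: the claim is equivalent to $P(\mu) := 3\|f\|_{\cH}^4\mu^2 + 2\|f\|_{\cH}^2\big(4\Re((f,A^2f)_{\cH}) + \|Af\|_{\cH}^2\big)\mu + \big(4\|f\|_{\cH}^2\|A^2f\|_{\cH}^2 - \|Af\|_{\cH}^4\big) \geq 0$ for $\mu \geq 0$, and you correctly note $P(0)\geq 0$ is Kato's inequality and the leading coefficient is favorable. But the remaining discriminant condition involves $\Re((f,A^2f)_{\cH})$, which for merely dissipative (non-symmetric) $A$ is \emph{not} equal to $\pm\|Af\|_{\cH}^2$, and it does not follow from Kato's inequality together with Cauchy--Schwarz alone. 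Your substitution idea (an operator $B$ with $B^2 = A^2 + \mu I_{\cH}$) does not make sense as stated, and the ``right test vector'' in the dissipativity inequality is never identified.

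Chernoff's argument in \cite{Ch79} proceeds along a different line: he establishes a general transference principle showing that Landau--Kolmogorov--type inequalities valid for the model dissipative operator $d/dx$ on $L^2((0,\infty);dx)$ carry over, with the same constants, to arbitrary generators of contraction semigroups on Hilbert space. The concrete half-line inequality \eqref{5.1} of Brodlie--Everitt then supplies the model case, and the abstract Theorem~\ref{t5.1} follows. This reduction-to-the-model-operator step is the idea missing from your proposal; a purely algebraic attack in the style of the proof of Theorem~\ref{t2.5}, while perhaps not impossible, would require substantially more than what you have written.
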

%%%%%%

A further extension of the Hardy--Littlewood inequality, naturally involving quadratic forms, was initiated by Everitt \cite{Ev71} in 1971 originally in the context of regular Sturm--Liouville problems on the half-line 
$(0,\infty)$. To describe this problem one needs a few preparations.

Suppose that $- \infty < a < b \leq \infty$, and assume that for all $c \in (a,b)$, 
\begin{align}
& p^{-1} \in L^1((a,c);dx), \; \text{$p > 0$ a.e.~on $(a,b)$},    \no \\
& q \in L^1((a,c);dx), \; \text{$q$ real-valued a.e.~on $(a,b)$},    \lb{5.4} \\
& r \in L^1((a,c);dx), \; \text{$r > 0$ a.e.~on $(a,b)$}.    \no 
\end{align}
Introducing the Sturm--Liouville differential expression (regular at the point $a$) 
\begin{equation}
\tau = r(x)^{-1}\bigg[ - \frac{d}{dx} p(x) \frac{d}{dx} + q(x)\bigg], \quad x \in (a,b), 
\end{equation}
assuming $\tau$ to be in the limit point case at the point $b$ (i.e., for all $c \in (a,b)$ and for all $z \in \bbC$, 
$\tau \psi = z \psi$, has at least one solution $\psi(z, \dott)$ not in $L^2((c,b); r(x)dx)$), one is interested in $L^2((a,b); r(x)dx)$-realizations associated with $\tau$ and introduces minimal and maximal operators corresponding to $\tau$ via
\begin{align}
& (T_{min} f)(x) = (\tau f)(x), \quad x \in (a,b),    \no \\
& f \in \dom(T_{min}) = \big\{g \in L^2((a,b); r(x)dx) \, \big| \, g, g^{[1]} \in AC([a,c]) \text{ for all $c \in (a,b)$}; \no \\
& \hspace*{4.3cm} g(a)=0=(pg')(a); \, \tau g \in L^2((a,b); r(x)dx)\big\}, \\
& (T_{max} f)(x) = (\tau f)(x), \quad x \in (a,b),    \no \\
& f \in \dom(T_{max}) = \big\{g \in L^2((a,b); r(x)dx) \, \big| \, g, g^{[1]} \in AC([a,c]) \text{ for all $c \in (a,b)$}; \no \\
& \hspace*{7.5cm}  \tau g \in L^2((a,b); r(x)dx)\big\}. 
\end{align}
Here we employed the notion of a quasi-derivative $g^{[1]}(x) = p(x) g'(x)$, $x \in (a,b)$.  
One then infers that $T_{min}$ is closed, densely defined, and symmetric and 
\begin{equation}
T_{min}^* = T_{max}, \quad T_{max}^* = T_{min}. 
\end{equation}
The problem posed by Everitt (in a slightly extended form) then reads as follows: Is there a constant 
$K_{\cD} = K_{\cD} ((a,b);p,q,r) \in (0,\infty)$ such that 
\begin{align}
\begin{split} 
& \bigg|\int_a^b dx \, \big[p(x) |f'(x)|^2 + q(x) |f(x)|^2\big]\bigg|  \lb{5.9} \\
& \quad \leq K_{\cD} \bigg(\int_a^b r(x) dx \, |f(x)|^2\bigg)^{1/2}  
\bigg(\int_a^b r(x) dx \, |(\tau f)(x)|^2\bigg)^{1/2}, \quad f \in \cD, 
\end{split} 
\end{align}
equivalently, 
\begin{align}
\begin{split} 
& \bigg|\int_a^b dx \, \Big[p(x)^{-1} \big|f^{[1]}(x)\big|^2 + q(x) |f(x)|^2\Big]\bigg|   \lb{5.10} \\
& \quad \leq K_{\cD} \|f\|_{L^2((a,b); r(x)dx)} \|\tau f\|_{L^2((a,b); r(x)dx)}, \quad f \in \cD, 
\end{split} 
\end{align}
for an appropriate linear subspace $\cD$ of $L^2((a,b); r(x)dx)$ satisfying 
\begin{equation}
\dom(T_{min}) \subseteq \cD \subseteq \dom(T_{max}). 
\end{equation}
It is implicitly assumed that both terms are finite on the left-hand side of \eqref{5.9}, \eqref{5.10}. 
Introducing the sesquilinear form 
\begin{align}
q(f,g) =  \int_a^b dx \, \Big[p(x)^{-1} \ol{f^{[1]}(x)} g^{[1]}(x) + q(x) \ol{f(x)} g(x)\Big], \quad f \in \cD,
\lb{5.12} 
\end{align}
(still implicitly assuming that both terms on the right-hand side of \eqref{5.12} are finite),  one recognizes that Everitt's problem is equivalent to the existence of $K_{\cD} = K_{\cD} ((a,b);p,q,r) \in (0,\infty)$ such that the integral inequality
\begin{equation}
|q(f,f)| \leq K_{\cD} \|f\|_{L^2((a,b); r(x)dx)} \|\tau f\|_{L^2((a,b); r(x)dx)}, \quad f \in \cD,    \lb{5.13}
\end{equation}
holds. In particular, the special case 
\begin{align}
& a = 0, \; b = \infty, \quad p(x) = r(x) =1, \quad q(x) = 0, \quad \tau = - \frac{d^2}{dx^2}, \quad x \in (0,\infty), 
\lb{5.14} \\
& \cD = H^2((0,\infty)), \quad K_{H^2((0,\infty))}(\bbR_+; 1,0,1) = 2,    \lb{5.15} 
& \intertext{or}   
& \cD = H_0^2((0,\infty)), \quad K_{H_0^2((0,\infty))}(\bbR_+; 1,0,1) = 1,     \lb{5.16} 
\end{align}
yields the classical Hardy--Littlewood inequality \cite{HL32},
\begin{equation}
\|f'\|_{L^2((0,\infty); dx)}^2 \leq 2 \, \|f\|_{L^2((0,\infty); dx)} \|f''\|_{L^2((0,\infty); dx)}, 
\quad f \in H^2((0,\infty)),    \lb{5.17} 
\end{equation}
(cf.\ \eqref{2.10}, \eqref{2.11} for $p=2$) and 
\begin{equation}
\|f'\|_{L^2((0,\infty); dx)}^2 \leq \|f\|_{L^2((0,\infty); dx)} \|f''\|_{L^2((0,\infty); dx)}, 
\quad f \in H_0^2((0,\infty)),     \lb{5.18} 
\end{equation}
as a special case of Lemma \ref{l4.3} since 
\begin{equation}
(A_0 f)(x) = f'(x), \; x \in (0,\infty), \quad  f \in \dom(A_0) = H_0^{1} ((0,\infty)),     \lb{5.19} 
\end{equation}
in $L^2((0,\infty); dx)$ is skew-symmetric (i.e., $i A_0$ is symmetric) and 
\begin{equation}
\big(A_0^2 f\big)(x) = f''(x), \; x \in (0,\infty), \quad f \in \dom\big(A_0^2\big) = H_0^{2} ((0,\infty)).  \lb{5.20} 
\end{equation}

The precise analysis of the problem stated in connection with \eqref{5.13}, originally due to Everitt \cite{Ev71},  is rather complex, utilizing results from the calculus of variations and from Weyl--Titchmarsh theory. This was continued in Bennewitz \cite{Be84}, Evans and Everitt \cite{EE82}, Evans and Zettl \cite{EZ78a}, and Ph\'ong \cite{Ph81} under more general hypotheses on $p, q,r$ employing, in addition, von Neumann's first formula for deficiency spaces (so adding some operator theory to this circle of ideas). To describe its solution we now follow the discussion in \cite{EE82}. 

Let $\bbC_{\pm} = \{\z \in \bbC \, | \, \pm \Im(z) > 0\}$, and suppose that 
$\theta_{\pi/2}(z, \dott), \phi_{\pi/2}(z, \dott)$ are solutions of $\tau u = z u$ (entire w.r.t. $z$) satisfying the initial conditions
\begin{align}
\begin{split}
& \theta_{\pi/2}(z, a) = 0, \quad  \;\;\;\, \theta_{\pi/2}^{[1]}(z, a) = 1,    \\
&  \phi_{\pi/2}(z, a) = -1, \quad  \phi_{\pi/2}^{[1]}(z, a) = 0,
\end{split}
\end{align}
and $\psi_{\pi/2,\pm}(z, \dott)$ is a (Neumann-type) Weyl--Titchmarsh solution
\begin{equation}
\psi_{\pi/2,\pm}(z,\dott) = \theta_{\pi/2}(z, \dott) + m_{\pi/2,\pm}(z) \phi_{\pi/2}(z, \dott) \in L^2((a,b); r(x)dx), 
\quad z \in \bbC_{\pm},
\end{equation}
with $m_{\pi/2,\pm}(\dott)$ the (Neumann-type) Weyl--Titchmarsh $m$-function analytic in $\bbC_{\pm}$, such that
\begin{equation}
\ol{m_{\pi/2,\pm} (z)} = m_{\pi/2,\mp}({\ol z}), \quad z \in \bbC_{\pm}.  
\end{equation}
Moreover, denoting $z = \rho e^{i \vartheta}$, $\rho \in (0,\infty)$, $\vartheta \in [0, 2\pi)$, one introduces
\begin{align}
& L_+(\vartheta) = \big\{\rho e^{i \vartheta} \, \big| \, \rho \in (0,\infty)\big\}, \quad  
L_-(\vartheta) = \big\{\rho e^{i (\vartheta + \pi)} \, \big| \, \rho \in (0,\infty)\big\}, \quad \vartheta \in (0, \pi/2], \no \\
& \vartheta_{\pm} = \inf \big\{\vartheta \in (0,\pi/2] \, \big| \, \text{for all } \varphi \in [\vartheta,\pi/2], 
\mp \Im\big(z^2 m_{\pi/2,\pm}(z)\big) \geq 0, \, z \in L_{\pm}(\varphi)\big\},   \no \\
& \vartheta_0 = \max \{\vartheta_+, \vartheta_-\} \in [0,\pi/2],      \\
& E_{\pm} = \big\{\rho \in (0,\infty) \, \big| \, z \in L_{\pm}(\vartheta_0), \, 
\Im\big(z^2 m_{\pi/2,\pm}(z)\big) = 0\big\},    \no \\
& Y_{\pm}(\rho,x) = \Im(z \psi_{\pi/2,\pm}(z,x)), \quad 
z \in L_{\pm}(\vartheta_0), \; x \in [a,b).    \no 
\end{align}
Finally, $\tau$ is said to be in the {\it strong limit point case at $b$} if 
\begin{equation}
\lim_{x \uparrow b} f(x) g^{[1]}(x) = 0, \quad f, g \in \dom(T_{max})      \lb{5.25}
\end{equation}
(see, e.g., \cite{Hi74} for sufficient conditions on $p,q,r$ guaranteeing the strong limit point endpoint). We recall that the strong limit point property at $b$ implies the limit point property at $b$ (cf.\ \cite{EE82}). 

Given these preparations, the principal result in \cite{EE82} (see also \cite{EE91}) reads as follows:

%%%%%%%
\begin{theorem} \lb{t5.2} $($Evans and Everitt \cite{EE82}$)$.  
Assume the conditions \eqref{5.4} and suppose that $\tau$ is in the strong limit point case at $b$. Given the preparations in \eqref{5.4}--\eqref{5.25}, the following assertions $(i)$--$(iv)$ hold: \\[1mm] 
$(i)$ $\vartheta_0 \in (0,\pi/2]$ $($in particular, $\vartheta_0 > 0$$)$. \\[1mm]
$(ii)$ The inequality
\begin{align}
\begin{split} 
& \bigg|\lim_{d \uparrow b} \int_a^d dx \, \big[p(x) |f'(x)|^2 + q(x) |f(x)|^2\big]\bigg|   \lb{5.26} \\
& \quad \leq K \|f\|_{L^2((a,b); r(x)dx)} \|\tau f\|_{L^2((a,b); r(x)dx)}, \quad f \in \dom(T_{max}), 
\end{split} 
\end{align}
is satisfied for some $K \in (0,\infty)$ if and only if  
\begin{equation} 
0 < \vartheta_0 < \pi/2   \lb{5.27}
\end{equation}
holds. \\[1mm] 
$(iii)$ If \eqref{5.27} holds, then the best constant $K$ in inequality \eqref{5.26}, denoted by $K_{\dom(T_{max})}((a,b); p,q,r)$, is given by
\begin{equation}
K_{\dom(T_{max})}((a,b); p,q,r) = [\cos(\vartheta_0)]^{-1}.     \lb{5.28} 
\end{equation} 
In particular, $K_{\dom(T_{max})}((a,b); p,q,r) > 1$. \\[1mm] 
$(iv)$ If \eqref{5.27} is satisfied and $K$ is given by $[\cos(\vartheta_0)]^{-1}$ in \eqref{5.28}, then the elements $f \in \dom(T_{max})$ yielding equality in \eqref{5.26} are determined according to the following three mutually exclusive situations: 
\\[1mm]
$(\alpha)$ $f =0$. \\[1mm]
$(\beta)$ There exists $0 \neq f \in \dom(T_{max})$ such that $\tau f =0$ and $f(a) = 0$ or $f^{[1]}(a) =0$  
$($but not both$)$, in which case either side of \eqref{5.26} vanishes. \\[1mm]
$(\gamma)$ $E_+ \cup E_- \neq \emptyset$, $f(x) = C Y_{\pm} (r,x)$, $r \in E_{\pm}$, 
$C \in \bbC \backslash \{0\}$, $x \in [a,b)$.
\end{theorem}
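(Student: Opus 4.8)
\textbf{Proof plan for Theorem \ref{t5.2}.} The overall strategy is to recast the left-hand side of \eqref{5.26} in operator-theoretic terms and then reduce the problem to a Hardy--Littlewood-type inequality on a half-line, where the optimal constant and the extremals are already understood. First I would use the strong limit point hypothesis \eqref{5.25} together with an integration by parts (Green's formula) to rewrite, for $f \in \dom(T_{max})$,
\begin{equation}
\lim_{d \uparrow b} \int_a^d dx \, \big[p(x) |f'(x)|^2 + q(x) |f(x)|^2\big] = (f, T_{max} f)_{L^2((a,b); r\,dx)} + \big[\ol{f(a)}(p f')(a) \text{ boundary term}\big].
\end{equation}
Thus the quadratic form $q(f,f)$ of \eqref{5.12} equals $(f, T_{max}f)$ modulo a boundary contribution at $a$; the content of the inequality is then an estimate $|(f, T_{max}f)| \le K \|f\| \, \|T_{max} f\|$, which by Cauchy--Schwarz holds trivially with $K=1$ but fails to be sharp because of the boundary term at the \emph{regular} endpoint $a$.

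Next I would invoke Weyl--Titchmarsh theory to parametrize the functions in $\dom(T_{max})$ near $b$ by the $L^2$ Weyl solutions $\psi_{\pi/2,\pm}(z,\dott)$ and transfer the problem to the upper/lower half-planes via the $m$-functions $m_{\pi/2,\pm}$. The key computational identity is the one expressing $q(f,f)$ for $f = \psi_{\pi/2,\pm}(z,\dott)$ (suitably cut off) in terms of $z$ and $z^2 m_{\pi/2,\pm}(z)$; the sign condition defining $\vartheta_{\pm}$, namely $\mp \Im(z^2 m_{\pi/2,\pm}(z)) \ge 0$ on the rays $L_{\pm}(\varphi)$, is precisely what makes the form definite along those rays. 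A variational argument (borrowed from the calculus of variations, as in Everitt \cite{Ev71}) then shows that the infimum in the inequality \eqref{5.26} is attained along the extremal ray $L_{\pm}(\vartheta_0)$, giving \eqref{5.28} with $K = [\cos(\vartheta_0)]^{-1}$; the strict inequality $K>1$ in part $(iii)$ follows once one knows $\vartheta_0 < \pi/2$, and the equivalence in part $(ii)$ is that $\vartheta_0 = \pi/2$ is exactly the degenerate case where no finite $K$ works. Part $(i)$, that $\vartheta_0 > 0$, would be established by an asymptotic analysis of $m_{\pi/2,\pm}(z)$ as $|z| \to \infty$ along rays: the leading high-energy behavior of the $m$-function forces the requisite sign near the imaginary axis, so the infimum defining $\vartheta_{\pm}$ is bounded away from $0$.

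For part $(iv)$, the case of equality, I would trace back through the variational argument: equality in \eqref{5.26} forces either the trivial solution ($\alpha$), or a null solution of $\tau$ satisfying exactly one of the two boundary conditions at $a$ so that both sides vanish ($\beta$) — this is the degenerate solution of $T_{max} f = 0$ that lies in $\dom(T_{max})$ but not in $\dom(T_{min})$ — or, in the genuine extremal case, $f$ must be (a constant multiple of) the imaginary part $Y_{\pm}(\rho, x) = \Im(z \psi_{\pi/2,\pm}(z,x))$ evaluated at a point $\rho \in E_{\pm}$ where $\Im(z^2 m_{\pi/2,\pm}(z))$ vanishes on the critical ray, which is case ($\gamma$). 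I expect the main obstacle to be the variational step identifying the infimum with $\cos(\vartheta_0)$: one must show both that no ray interior to the sector $(\vartheta_0, \pi/2]$ can do better (an optimization over the one-parameter family of Weyl solutions, using that $T_{max}$ is the adjoint of the symmetric $T_{min}$ and von Neumann's formula for the deficiency spaces) and that the value $[\cos(\vartheta_0)]^{-1}$ is actually achieved in the limit, which requires the $L^2$-membership and decay estimates on $\psi_{\pi/2,\pm}(z,\dott)$ uniformly as $z$ approaches $L_{\pm}(\vartheta_0)$. The boundary-term bookkeeping at the regular endpoint $a$ and the careful handling of the limit $d \uparrow b$ (where the strong limit point property \eqref{5.25} is exactly what is needed to kill the boundary term at $b$) are the other places where care is required, but those are more routine than the sharp-constant variational analysis.
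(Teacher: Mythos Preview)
The paper does not supply its own proof of Theorem~\ref{t5.2}; this is a survey result quoted from Evans and Everitt \cite{EE82}, and the paper only indicates in the preceding paragraph that the argument ``is rather complex, utilizing results from the calculus of variations and from Weyl--Titchmarsh theory,'' with later treatments also invoking von Neumann's first formula for deficiency spaces. There is therefore no in-paper proof to compare your proposal against.

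That said, your outline is broadly in line with what the paper reports about the methods of \cite{EE82}: integration by parts using the strong limit point hypothesis to reduce the quadratic form to $(f,T_{max}f)$ plus a boundary term at the regular endpoint, parametrization of $\dom(T_{max})$ via Weyl solutions, and a variational/optimization argument over rays in the upper and lower half-planes governed by the sign of $\Im(z^2 m_{\pi/2,\pm}(z))$. Two small corrections are worth flagging. First, the strict inequality $K_{\dom(T_{max})}>1$ in part~$(iii)$ follows from $\vartheta_0>0$ (part~$(i)$), since $[\cos(\vartheta_0)]^{-1}>1$ exactly when $\vartheta_0>0$; the condition $\vartheta_0<\pi/2$ is what guarantees $K$ is \emph{finite}, not that it exceeds $1$. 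Second, your heuristic for part~$(i)$ is somewhat inverted: high-energy asymptotics of $m_{\pi/2,\pm}$ along rays near the imaginary axis explain why the defining set for $\vartheta_\pm$ is nonempty (so $\vartheta_\pm\le\pi/2$), whereas showing $\vartheta_0>0$ requires ruling out the sign condition on rays $L_\pm(\varphi)$ for all small $\varphi$, which is governed by the behavior of $z^2 m_{\pi/2,\pm}(z)$ as $z$ approaches the real axis --- a different asymptotic regime. These are refinements of a sketch rather than genuine gaps; the substantive analysis you anticipate (the sharp-constant variational step and the identification of extremals via $Y_\pm$) is indeed where the work lies, and for that one must consult \cite{EE82} directly.
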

%%%%%%%

%%%%%%%
\begin{remark} \lb{r5.3}
$(i)$ Returning to the classical Hardy--Littlewood case (see, \eqref{2.10}, \eqref{2.11} for $p=2$ or 
\eqref{5.14}, \eqref{5.15}, \eqref{5.17}) one obtains
\begin{align}
& m_{\pi/2,\pm}(z) = i z^{-1/2}, \quad \psi_{\pi/2,\pm}(z,x) = - i z^{-1/2} e^{i z^{1/2} x}, \quad z \in \bbC_{\pm}, \; 
x \in (0,\infty),  \no \\
& \vartheta_+ = \pi/3, \quad \vartheta_- = 0, \quad \vartheta_0 = \pi/3, 
\quad K_{H^2((0,\infty))}(\bbR_+;1,0,1) = 2,    \lb{5.29} \\
&E_+ = (0,\infty), \quad E_- = \emptyset,   \no \\ 
& Y_+(\rho,x) 
= e^{-\rho x/2} \sin\big(\big(3^{1/2} \rho x/2\big) - (\pi/3)\big), \quad \rho \in (0,\infty), \; x \in [0,\infty),  \no
\end{align}
confirming \eqref{5.17} once again. 

Reference \cite{EZ78a} treats the analog of Theorem \ref{t5.2} and the Hardy--Littlewood example \eqref{5.29} with the Neumann-type $m$-function $m_{\pi/2,\pm}(\dott)$ replaced by the Dirichlet-type $m$-function $m_{0,\pm}(\dott) = -1/m_{\pi/2,\pm}(\dott)$. \\[1mm]
$(ii)$ Everitt and Zettl \cite{EZ78} showed that the example 
\begin{align}
\begin{split} 
& a = 0, \; b = \infty, \quad p(x) = x^{\beta}, \quad q(x) = 0, \quad r(x) = x^{\alpha}, \quad \alpha > -1, \; 
\beta <1, \\
& \tau = - x^{- \alpha} \frac{d}{dx} x^{\beta} \frac{d}{dx}, \quad x \in (0,\infty), 
\lb{5.30} 
\end{split}
\end{align} 
leads to 
\begin{align}
\begin{split} 
m_{\pi/2,\pm}(z) &= \f{(2+\alpha-\beta)^{2(1-\beta)/(2+\alpha-\beta)}}{1-\beta} 
\f{\Gamma((3+\alpha-2\beta)/(2+\alpha-\beta))}{\Gamma((1+\alpha)/(2+\alpha-\beta))}     \\
& \quad \times e^{i \pi (1-\beta)/(2+\alpha-\beta)} z^{- (1-\beta)/(2+\alpha-\beta)}, \quad z \in \bbC_{\pm},
\end{split}  
\end{align} 
and hence to 
\begin{equation}
K_{\dom(T_{max})}(\bbR_+; x^{\beta},0,x^{\alpha}) = [\cos(\pi (1+\alpha)/(3+2\alpha-\beta))]^{-1}. 
\end{equation}
In particular, Everitt and Zettl \cite{EZ78} (see also \cite{EJ77}) showed that in the special case $\beta = 0$, the constant $K_{\dom(T_{max})}\big(\bbR_+; 1,0,x^{\alpha}\big)$ takes on every value in $(1,\infty)$ as 
$\alpha$ ranges in $(-1, \infty)$. 

For numerous additional explicit examples see, \cite{BBBBDEEKL98}, \cite{BEEK94}, \cite{EE82}, \cite{EE85}, \cite{EE91}, \cite{EEHJ98}, \cite{EEHR86}, \cite{Ev71}, \cite{Ev75}, \cite{Ev93}, \cite{EJ77}, \cite{EK07}; for numerical investigations in this context we refer to \cite{BKP89}. \\[1mm]
$(iii)$ Theorem \ref{t5.2} treats the case where $a$ is a regular endpoint and the endpoint $b$ is in the strong  limit point case. The case where $b$ is either a regular or a limit circle nonoscillatory endpoint is treated in \cite{EE91}. Moreover, the case where $a$ and $b$ are in the strong limit point case (with constant $K=1$ in the analog of \eqref{5.26}, and $\lim_{c \downarrow a, d \uparrow b} \int_c^d dx \, ...$ on the l.h.s. of \eqref{5.26}) is considered in 
\cite{Ev78}. \hfill $\diamond$
\end{remark}
%%%%%%%

Next, we briefly return to quadratic forms and follow up on some ideas presented by Ph\'ong \cite{Ph81}. 

Let $A$ be a symmetric operator in the Hilbert space $\cH$ bounded from below, that is, 
$A \subseteq A^*$ and for some $c \in \bbR$, $A \ge c I_{\cH}$. We denote by $\ol A$ the closure of $A$ 
in $\cH$, and introduce the associated forms in $\cH$, 
\begin{align}
& q_A(f,g) = (f, A g)_{\cH}, \quad f, g \in \dom(q_A) = \dom(A),   \lb{5.31} \\
& q_{\ol A}(f,g) = (f, {\ol A} g)_{\cH}, \quad f, g \in \dom(q_{\ol A}) = \dom({\ol A}),    \lb{5.32} 
\end{align} 
then the closures of $q_A$ and $q_{\ol A}$ coincide in $\cH$ (cf., e.g., \cite[Lemma~5.1.12]{BHS20}) 
\begin{equation}
\ol{q_A} = \ol{q_{\ol A}}    \lb{5.33}
\end{equation}
and the first representation theorem for forms (see, e.g., \cite[Theorem~4.2.4]{EE18}, 
\cite[Theorem~VI.2.1, Sect.~VI.2.3]{Ka80}) yields 
\begin{equation}
\ol{q_A} (f,g) = (f, A_F g)_{\cH}, \quad f \in \dom (\ol{q_A}), \; g \in \dom(A_F),     \lb{5.34}
\end{equation}
where $A_F \geq c I_{\cH}$ represents the self-adjoint Friedrichs extension of $A$. Due to the fact 
\eqref{5.33}, one infers (cf., e.g., \cite[Lemma~5.3.1]{BHS20})
\begin{equation}
A_F = (\ol A)_F.    \lb{5.35} 
\end{equation} 
The second representation theorem for forms (see, e.g., \cite[Theorem~4.2.8]{EE18}, 
\cite[Theorem~VI.2.123]{Ka80}) then yields the additional result
\begin{align}
\begin{split} 
\ol{q_A} (f,g) = \big((A_F - c I_{\cH})^{1/2} f, (A_F - c I_{\cH})^{1/2} g\big)_{\cH} + c \, \|f\|_{\cH}^2, \\
 f, g \in \dom(\ol{q_A}) = \dom\big(|A_F|^{1/2}\big).&
 \end{split} 
\end{align} 
Moreover, one has the fact (see, e.g., \cite[Theorem~5.3.3]{BHS20}, \cite[Corollary~4.2.7]{EE18})
\begin{equation}
\dom(A_F) = \dom(\ol{q_A}) \cap \dom(A^*) = \dom\big(|A_F|^{1/2}\big) \cap \dom(A^*).   \lb{5.37}
\end{equation}

Returning to \eqref{5.31}, we now consider an extension $Q$ of the form $q_A$ in $\cH$ satisfying 
\begin{equation}
Q(f, g) = (f, Ag)_{\cH}, \quad f \in \dom(Q), \; g \in \dom(A). 
\end{equation} 
Then Cauchy's inequality (cf.\ also Lemma \ref{l4.3}) implies the elementary estimate
\begin{equation}
|Q (f,f)| \leq \|f\|_{\cH} \|A f\|_{\cH}, \quad f \in \dom(A),    \lb{5.39}
\end{equation}
and Ph\'ong \cite[p.~35--36]{Ph81} then points out via the following counterexample that the estimate \eqref{5.39}, in general, permits no extension of the type: There exists a constant $K \in (0,\infty)$ such that 
\begin{equation}
|Q (f,f)| \leq K \, \|f\|_{\cH} \|A^* f\|_{\cH}, \quad f \in \dom(Q) \cap \dom(A^*).    \lb{5.40}
\end{equation}

%%%%%%
\begin{example} \lb{e5.4}
Consider 
\begin{align}
\begin{split} 
& \tau_0 = - \f{d^2}{dx^2}, \quad x \in (0,1),   \\
& (T_{0,min} f)(x) = - f''(x), \; x \in (0,1), \quad f \in \dom(T_{0,min}) = H_0^2((0,1)),  \\
& (T_{0,max} f)(x) = - f''(x), \; x \in (0,1), \quad f \in \dom(T_{0,max}) = H^2((0,1)),  \\
& T_{0,max} = T_{0,min}^*, \quad T_{0,max}^* = T_{0,min},  \\
& \ker(T_{0,max}) = \ker(T_{0,min}^*) = {\rm lin.span} \{u_0, u_1\},   \lb{5.41} \\
& \hspace*{1.45cm} u_0(x) = 1, \quad u_1(x) = x, \quad x \in (0,1),  \\ 
& Q_0 (f,g) = (f', g')_{L^2((0,1); dx)}, \quad f, g \in \dom(Q_0) = H^1((0,1)),   \\
& q_{T_{0,min}} (f,g) = (f, (-g''))_{L^2((0,1); dx)}, \quad f, g \in \dom(q_{T_{0,min}}) = H_0^2((0,1)),   \\ 
& \ol{q_{T_{0,min}}} (f,g) = (f', g')_{L^2((0,1); dx)}, \quad f, g \in \dom(\ol{q_{T_{0,min}}}) = H_0^1((0,1)),   \\
& (T_{0,min,F} f)(x) = - f''(x), \; x \in (0,1), \\ 
& f \in \dom(T_{0,min,F}) = H_0^1((0,1)) \cap H^2((0,1)). 
\end{split}
\end{align}
Then 
\begin{align}
\begin{split} 
& Q_0(u_1,u_1) = 1,   \\
& T_{0, min}^* u_1 = T_{0,max} u_1 = 0, 
\end{split} 
\end{align}
and hence there exists no $K \in (0,\infty)$ such that
\begin{align}
\begin{split} 
& |Q_0(f,f)| \leq K \|f\|_{L^2((0,1); dx)} \|f''\|_{L^2((0,1); dx)}, \\
& f \in \dom(Q_0) \cap \dom(T_{0, min}^*) = \dom(T_{0, min}^*) = H^2((0,1)),   \lb{5.43} 
\end{split} 
\end{align}
holds. $($Indeed, taking $f = u_1$ yields $1$ on the l.h.s. of the inequality in \eqref{5.43} and $0$ on its r.h.s.$)$
\end{example} 
%%%%%%

Given the negative answer to the problem formulated in connection with \eqref{5.40}, we now describe an elementary affirmative approach involving the Friedrichs extension $A_F$ of $A$.

%%%%%%
\begin{lemma} \lb{l5.5}
Suppose $A$ is symmetric and bounded from below in $\cH$. Then, in addition to \eqref{5.39} one has 
\begin{align}
\begin{split} 
|\ol{q_A} (f,f)| \leq \|f\|_{\cH} \|A_F f \|_{\cH} = \|f\|_{\cH} \|A^* f \|_{\cH},&    \lb{5.44} \\
f \in \dom(A_F) = \dom(\ol{q_A}) \cap \dom(A^*).&
\end{split}   
\end{align}
\end{lemma}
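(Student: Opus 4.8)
The plan is to exploit the Friedrichs extension $A_F$ and the second representation theorem for its associated form, treating $A_F - cI_{\cH}$ as a nonnegative self-adjoint operator to which Theorem \ref{t4.1} applies. First, I would observe that for $f \in \dom(A_F)$, equation \eqref{5.37} gives $\dom(A_F) = \dom(\ol{q_A}) \cap \dom(A^*)$, so both quantities $\ol{q_A}(f,f)$ and $\|A_F f\|_{\cH}$ are well-defined, and moreover $A_F f = A^* f$ since $A_F \subseteq A^*$ (the Friedrichs extension is a restriction of the adjoint). This already settles the claimed equality $\|A_F f\|_{\cH} = \|A^* f\|_{\cH}$ on $\dom(A_F)$.

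Next, for the inequality itself, I would use \eqref{5.34}: for $f \in \dom(A_F)$ one has $\ol{q_A}(f,f) = (f, A_F f)_{\cH}$, which is real-valued since $A_F$ is self-adjoint. Then Cauchy's inequality (the Cauchy--Schwarz inequality in $\cH$) gives directly
\begin{equation*}
|\ol{q_A}(f,f)| = |(f, A_F f)_{\cH}| \leq \|f\|_{\cH} \|A_F f\|_{\cH} = \|f\|_{\cH} \|A^* f\|_{\cH},
\end{equation*}
which is exactly \eqref{5.44}. This is essentially the same one-line argument as in the proof of Lemma \ref{l4.3}, now applied to the self-adjoint operator $A_F$ in place of the original symmetric $A$, and the point is merely that passing to $A_F$ is legitimate because \eqref{5.34} identifies the closed form $\ol{q_A}$ with the form of $A_F$ on $\dom(A_F)$.

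There is really no serious obstacle here; the content of the lemma is bookkeeping about domains rather than a hard estimate. The one point that deserves care — and which I would expect to be the ``main obstacle'' in the sense of being the only non-cosmetic step — is justifying that $\ol{q_A}(f,f) = (f, A_F f)_{\cH}$ holds for all $f \in \dom(A_F)$ (not just the sesquilinear version with one argument restricted), and that $\ol{q_A}(f,f)$ is genuinely real; both follow from the first representation theorem \eqref{5.34} together with self-adjointness of $A_F$, since $(f, A_F f)_{\cH} = \overline{(A_F f, f)_{\cH}} = \overline{(f, A_F f)_{\cH}}$. Once that is in place, Cauchy--Schwarz finishes the proof and \eqref{5.37} supplies the domain identity displayed on the second line of \eqref{5.44}.
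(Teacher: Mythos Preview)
Your proof is correct and is essentially identical to the paper's own argument, which simply reads ``It suffices to combine \eqref{5.34} and \eqref{5.37}.'' The opening sentence about the second representation theorem and Theorem~\ref{t4.1} is a red herring you never actually invoke; the proof you carry out uses only \eqref{5.34}, Cauchy--Schwarz, the inclusion $A_F \subseteq A^*$, and \eqref{5.37}, exactly as in the paper.
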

%%%%%%  
\begin{proof}
It suffices to combine \eqref{5.34} and \eqref{5.37}.
\end{proof}
%%%%%%

Due to \eqref{5.33}, \eqref{5.35}, one can systematically replace $A$ by $\ol A$ in the above considerations.

%%%%%%
\begin{remark} \lb{r5.6}
$(i)$ Inequality \eqref{5.39} applied to $T_{0,min}$ in Example \ref{e5.4} yields 
\begin{equation}
\|f'\|_{L^2((0,1); dx)}^2 \leq \|f\|_{L^2((0,1); dx)} \|f''\|_{L^2((0,1); dx)}, 
\quad f \in H_0^2((0,1)),     \lb{5.45} 
\end{equation}
and applying Lemma \ref{l5.5}  to $T_{0,min}$ in Example \ref{e5.4} then implies the improvement
\begin{equation}
\|f'\|_{L^2((0,1); dx)}^2 \leq \|f\|_{L^2((0,1); dx)} \|f''\|_{L^2((0,1); dx)}, 
\quad f \in H_0^1((0,1)) \cap H^2((0,1)).      \lb{5.46} 
\end{equation}
$(ii)$ Similarly, Lemma \ref{l5.5} applied in the context of \eqref{5.14}--\eqref{5.20} yields the following improvement of \eqref{5.16} and \eqref{5.18} in the form 
\begin{equation}
\|f'\|_{L^2((0,\infty); dx)}^2 \leq \|f\|_{L^2((0,\infty); dx)} \|f''\|_{L^2((0,\infty); dx)}, 
\quad f \in H_0^1((0,1)) \cap H^2((0,\infty)),     \lb{5.47} 
\end{equation}
that is,
\begin{equation} 
K_{H_0^1((0,\infty)) \cap H^2((0,\infty))} = 1. 
\end{equation}
Indeed, this follows from the fact (cf.\ \eqref{5.20}) 
\begin{align}
& \big(\big(- A_0^2\big)^* f\big)(x) = - f''(x), \; x \in (0,\infty), \quad  
f \in \dom\big(\big(- A_0^2\big)^*\big) = H^2((0,\infty)),    \no \\  
& \big(\big(- A_0^2\big)_F f\big)(x) = - f''(x), \; x \in (0,\infty), \\
& f \in \dom\big(\big(- A_0^2\big)_F\big) = H_0^1((0,\infty)) \cap H^2((0,\infty)).    \no 
\end{align}
${}$ \hfill $\diamond$
\end{remark}
%%%%%%

We will provide one more explicitly solvable example illustrating Lemma \ref{l5.5}, but due to the complexity of the example we will present it separately in the next section.

%%%%%%%%%%%%%%%%%%%%%%%%%%%%%%%
%%%%%%%%%%%%%%%%%%%%%%%%%%%%%%%
\section{An Explicitly Solvable Example: A Generalized Bessel Equation} \lb{s6} 
%%%%%%%%%%%%%%%%%%%%%%%%%%%%%%%
%%%%%%%%%%%%%%%%%%%%%%%%%%%%%%%

In this section we analyze the following explicitly solvable example detailed in \eqref{6.1}, \eqref{6.2} below. 

Let $a=0$, $b = \infty$ in \eqref{5.4}, and consider the concrete example 
\begin{equation}
\begin{split} \lb{6.1}
p(x)=x^\b ,\quad r(x)=x^\a, \quad q (x) = \frac{(2+\a-\b)^2\g^2-(1-\b)^2}{4}x^{\b-2}, \\
\a>-1,\ \b<1,\ \g\geq0,\ x\in(0,\infty).     
\end{split}
\end{equation}
Then 
\begin{align}
\begin{split}
\tau_{\a,\b,\g} = x^{-\a}\left[-\frac{d}{dx}x^\b\frac{d}{dx} +\frac{(2+\a-\b)^2\g^2-(1-\b)^2}{4}x^{\b-2}\right],\\
\a>-1,\ \b<1,\ \g\geq0,\ x\in(0,\infty),     \lb{6.2} 
\end{split}
\end{align}
is singular at the endpoint $0$ (since the potential, $q$ is not integrable near $x=0$) and in the limit point case at $\infty$. Furthermore, $\tau_{\a,\b,\g}$ is in the limit circle case at $x=0$ if $0\leq\g<1$ and in the limit point case at $x=0$ when $\g\geq1$. 

Solutions to \eqref{6.1} are given by (cf.\ \cite[No.~2.162, p.~440]{Ka61})
\begin{align}
y_{1,\a,\b,\g}(z,x)&=x^{(1-\b)/2} J_{\gamma}\big(2z^{1/2} x^{(2+\a-\b)/2}/(2+\a-\b)\big),\quad \g\geq0,\\
y_{2,\a,\b,\g}(z,x)&=\begin{cases}
x^{(1-\b)/2} J_{-\gamma}\big(2z^{1/2} x^{(2+\a-\b)/2}/(2+\a-\b)\big), & \g\notin\bbN_0,\\
x^{(1-\b)/2} Y_{\g}\big(2z^{1/2} x^{(2+\a-\b)/2}/(2+\a-\b)\big), & \g\in\bbN_0,
\end{cases}\ \g\geq0,
\end{align}
where $J_{\nu}(\dott), Y_{\nu}(\dott)$ are the standard Bessel functions of order $\nu \in \bbR$ 
(cf.\ \cite[Ch.~9]{AS72}).

In the following we assume that 
\begin{equation}
\gamma \in [0,1) 
\end{equation}
to ensure the limit circle case at $x=0$. In this case it suffices to focus on  the generalized boundary values at the singular endpoint $x = 0$ following \cite{GLN20}. For this purpose we introduce principal and nonprincipal solutions $u_{0,\a,\b,\gamma}(0, \dott)$ and $\hatt u_{0,\a,\b,\gamma}(0, \dott)$ of $\tau_{\alpha,\beta,\gamma} u = 0$ at $x=0$ by
\begin{align}
\begin{split} 
u_{0,\a,\b,\gamma}(0, x) &= (1-\b)^{-1}x^{[1-\b+(2+\a-\b)\g]/2}, \quad \gamma \in [0,1),   \\
\hatt u_{0,\a,\b,\gamma}(0, x) &= \begin{cases} (1-\b)[(2+\a-\b) \gamma]^{-1} x^{[1-\b-(2+\a-\b)\g]/2}, & \gamma \in (0,1),     \lb{6.6} \\
(1-\b)x^{(1-\b)/2} \ln(1/x), & \gamma =0,  \end{cases}\\
&\hspace*{4.5cm} \a>-1,\; \b<1,\; x \in (0,1).
\end{split} 
\end{align}

%%%%%%
\begin{remark}
Since the singularity of $q$ at $x=0$ renders $\tau_{\alpha,\beta,\gamma}$ singular at $x=0$ (unless, of 
course, $\gamma = (1-\beta)/(2+\alpha-\beta)$, in which case $\tau_{\alpha,\beta,(1-\beta)/(2+\alpha-\beta)}$ is regular at $x=0$), there is a certain freedom in the choice of the multiplicative constant in the principal solution $u_{0,\alpha,\beta,\gamma}$ of $\tau_{\alpha,\beta,\gamma} u = 0$ at $x=0$. Our choice of $(1-\beta)^{-1}$ in \eqref{6.6} reflects continuity in the parameters when comparing to boundary conditions in the regular case (cf.\ \cite[Remark~3.12\,$(ii)$]{GLN20}), that is, in the case $\alpha > -1$, $\beta < 1$, and 
$\gamma = (1-\beta)/(2+\alpha-\beta)$ treated in \cite{EZ78} (see Remark \ref{r5.3}\,$(ii)$). 
\hfill $\diamond$ 
\end{remark}
%%%%%%

According to \cite{GLN20} the generalized boundary values for $g \in \dom(T_{\alpha, \beta, \gamma,max})$ are then of the form
\begin{align}
\begin{split} 
\wti g(0) &= - W(u_{0,\alpha,\beta,\gamma}(0, \dott), g)(0)   \\
&= \begin{cases} \lim_{x \downarrow 0} g(x)\big/\big[(1-\b)[(2+\a-\b) \gamma]^{-1} x^{[1-\b-(2+\a-\b)\g]/2}\big], & 
\gamma \in (0,1), \\[1mm]
\lim_{x \downarrow 0} g(x)\big/\big[(1-\b)x^{(1-\b)/2} \ln(1/x)\big], & \gamma =0, 
\end{cases} 
\end{split} \\
\begin{split} 
\wti g^{\, \prime} (0) &= W(\hatt u_{0,\alpha,\beta,\gamma}(0, \dott), g)(0)   \\
&= \begin{cases} \lim_{x \downarrow 0} \big[g(x) - \wti g(0) (1-\b)[(2+\a-\b) \gamma]^{-1} x^{[1-\b-(2+\a-\b)\g]/2}\big]\\
\qquad\qquad\big/\big[(1-\b)^{-1}x^{[1-\b+(2+\a-\b)\g]/2}\big], 
& \hspace{-.2cm}\gamma \in (0,1), \\[1mm]
\lim_{x \downarrow 0} \big[g(x) - \wti g(0) (1-\b)x^{(1-\b)/2} \ln(1/x)\big]\\
\qquad\qquad\big/\big[(1-\b)^{-1}x^{(1-\b)/2}\big], & \hspace{-.2cm} \gamma =0.
\end{cases}
\end{split} 
\end{align}

Next, introducing the standard normalized fundamental system of solutions 
$\phi_{\alpha, \beta, \gamma, 0}(z, \dott), \theta_{\alpha, \beta, \gamma, 0}(z, \dott)$ of 
$\tau_{\alpha, \beta, \gamma} u = z u$, $z \in \bbC$, that is real-valued for $z \in \bbR$ and entire with respect to $z \in \bbC$ by 
\begin{align}
\begin{split} 
\wti \phi_{\alpha, \beta, \gamma, 0} (z,0) &= 0, \quad 
\wti \phi_{\alpha, \beta, \gamma, 0}^{\, \prime} (z,0) = 1,    \\ 
\wti \theta_{\alpha, \beta, \gamma, 0} (z,0) &= 1, \quad \, 
\wti \theta_{\alpha, \beta, \gamma, 0}^{\, \prime} (z,0) = 0, \quad 
z \in \bbC,     \lb{6.8}
\end{split} 
\end{align}
one obtains explicitly, 
\begin{align}
& \phi_{\a,\b,\gamma,0}(z,x) = \begin{cases} (1-\b)^{-1}(2+\a-\b)^\gamma\Gamma(1+\g) z^{- \gamma/2} 
y_{1,\a,\b,\g}(z,x), & \gamma \in (0,1), \\[1mm]
(1-\b)^{-1}y_{1,\a,\b,0}(z,x), & \gamma = 0, 
\end{cases}    \no \\
& \hspace*{8.2cm} z \in \bbC, \; x \in (0,\infty),      \\
& \theta_{\a,\b,\gamma,0}(z,x) = \begin{cases} (1-\b)(2+\a-\b)^{-\gamma - 1} \gamma^{-1} \Gamma(1 - \gamma) 
z^{\gamma/2} y_{2,\a,\b,\g}(z,x), & \hspace{-.1cm} \gamma \in (0,1), \\[1mm]
(1-\b)(2+\a-\b)^{-1} [- \pi y_{2,\a,\b,0}(z,x)\\
\quad+(\ln(z)-2\ln(2+\a-\b)+2\g_E) y_{1,\a,\b,0}(z,x)], & \hspace{-.1cm} \gamma =0, 
\end{cases}      \no \\ 
& \hspace*{8.2cm} z \in \bbC, \; x \in (0,\infty),    \\
& W(\theta_{\a,\b,\g,0}(z,\dott), \phi_{\a,\b,\gamma,0}(z,\dott)) =1, \quad z \in \bbC,
\end{align}
where $\Gamma(\dott)$ denotes the Gamma function, and $\gamma_{E} = 0.57721\dots$ represents Euler's constant.

Since $\tau_{\a,\b,\gamma}$ is in the limit point case at $\infty$ (actually, it is in the strong limit point case at infinity since $q$ is bounded on any interval of the form $[R,\infty)$, $R>0$, and the strong limit point property of $\tau_{\alpha, \beta, \gamma =(1-\beta)/(2+\alpha-\beta)}$ has been shown in \cite{EZ78}), in order to find the $m$-function corresponding to the Friedrichs (resp., Dirichlet) boundary condition at $x=0$, one considers the requirement
\begin{align}
& \psi_{\a,\b,\g,0}(z,\dott)=\theta_{\a,\b,\gamma,0}(z,\dott) 
+ m_{\a,\b,\g,0}(z)\phi_{\a,\b,\gamma,0}(z,\dott)\in L^2((0,\infty);x^\a dx),    \no \\
& \hspace*{9.5cm} z \in \bbC \backslash \bbR. 
\end{align}
This implies
\begin{align}
& \psi_{\a,\b,\gamma,0}(z,x) = \begin{cases} i (1-\b)(2+\a-\b)^{-\gamma -1} \gamma^{-1} \Gamma(1 - \gamma) \sin(\pi \gamma) 
z^{\gamma/2}  \\
\quad \times x^{(1-\b)/2} H^{(1)}_{\gamma}\big(2z^{1/2} x^{(2+\a-\b)/2}/(2+\a-\b)\big), & \gamma \in (0,1), \\[1mm]
i \pi(1-\b)/(2+\a-\b) x^{(1-\b)/2}\\
\quad \times H^{(1)}_0\big(2z^{1/2} x^{(2+\a-\b)/2}/(2+\a-\b)\big), & \gamma = 0, 
\end{cases}    \no \\
& \hspace*{7cm} z \in \bbC \backslash [0,\infty), \; x \in (0,\infty), \label{6.14}   \\
& m_{\a,\b,\g,0}(z) = \begin{cases} - e^{-i \pi \gamma} (1-\b)^2(2+\a-\b)^{- 2 \gamma - 1} \gamma^{-1} \\
\quad \times [\Gamma(1 - \gamma)/\Gamma(1+\gamma)] z^{\gamma}, & \gamma \in (0,1), \\[1mm]
 (1-\b)^2/(2+\a-\b)   \\
\quad \times [i \pi -\ln(z)+ 2\ln(2+\a-\b)- 2\gamma_{E}], & \gamma = 0,    
\end{cases}     \label{6.15} \\
& \hspace*{7.8cm} z \in \bbC \backslash [0,\infty),    \no 
\end{align}
where $H_{\nu}^{(1)}(\dott)$ is the Hankel function of the first kind and of order $\nu \in \bbR$ 
(cf.\ \cite[Ch.~9]{AS72}). In particular, the results \eqref{6.14} and \eqref{6.15} coincide with the ones obtained in \cite{GLN20} when $\a=\b=0$ and \cite{EZ78} when $\g=(1-\b)/(2+\a-\b)$.

$L^2$-realizations associated with the differential expression $\tau_{\alpha,\beta,\gamma}$ are then introduced as usual by
\begin{align}
& (T_{\alpha,\beta,\gamma, max} f)(x) = (\tau_{\alpha,\beta,\gamma} f)(x), \quad x \in (0,\infty),   \no \\
& \, f \in \dom(T_{\alpha,\beta,\gamma, max}) = \big\{g \in L^2((0,\infty); x^{\alpha} dx) \, \big | \, 
g, g' \in AC_{loc}((0,\infty)); \\  
& \hspace*{6.4cm} \tau_{\alpha,\beta,\gamma} g \in L^2((0,\infty); x^{\alpha} dx) \big\},    \no \\
\begin{split} 
& (T_{\alpha,\beta,\gamma, min} f)(x) = (\tau_{\alpha,\beta,\gamma} f)(x), \quad x \in (0,\infty),    \\
& \, f \in \dom(T_{\alpha,\beta,\gamma, min}) = \big\{g \in \dom(T_{\alpha,\beta,\gamma, max}) \, \big | \, 
\wti g (0) =0, \, \wti g^{\, \prime} (0) =0 \big\},    \\
\end{split} 
\end{align}
in particular,
\begin{equation}
T_{\alpha,\beta,\gamma, min}^* = T_{\alpha,\beta,\gamma, max}, \quad 
T_{\alpha,\beta,\gamma, max}^* = T_{\alpha,\beta,\gamma, min}.
\end{equation}
Thus, following Kalf \cite{Ka78} (see also \cite{NZ92}, \cite{Ro85}) one obtains for the Friedrichs extension 
$T_{\alpha,\beta,\gamma, F}$ of $T_{\alpha,\beta,\gamma, min}$,
\begin{align}
\begin{split} 
& (T_{\alpha,\beta,\gamma, F} f)(x) = (\tau_{\alpha,\beta,\gamma} f)(x), \quad x \in (0,\infty),    \\
& \, f \in \dom(T_{\alpha,\beta,\gamma, F}) = \big\{g \in \dom(T_{\alpha,\beta,\gamma, max}) \, \big | \, 
\wti g (0) =0 \big\}.   \lb{6.19} \\
\end{split} 
\end{align}
Since 
\begin{equation} 
u_{0,\a,\b,\gamma}(0, x) = (1-\b)^{-1}x^{[1-\b+(2+\a-\b)\g]/2} > 0, \quad \gamma \in [0,1), \; x \in (0,1),
\end{equation} 
standard oscillation theory implies that 
\begin{equation}
T_{\alpha,\beta,\gamma, min} \geq 0 \, \text{ and hence } \, T_{\alpha,\beta,\gamma, F} \geq 0.
\end{equation}
Incidentally, we note that $T_{\alpha,\beta,\gamma, F} \geq 0$ also follows from the Stieltjes inversion 
formula applied to $m_{\a,\b,\g,0}(\dott)$, proving that the spectral measure corresponding to 
$T_{\alpha,\beta,\gamma, F}$ (i.e., the measure in the Nevanlinna--Herglotz representation of 
$m_{\a,\b,\g,0}(\dott)$) has no support in $(-\infty,0)$.

Next one observes that  
\begin{align}
& u_{0,\alpha,\beta,\gamma}(x) [x^{\beta} u_{0,\alpha,\beta,\gamma}'(x)] \underset{x \downarrow 0}{=} 
\Oh\big(x^{(2 + \alpha - \beta)\gamma}\big), \quad \gamma \in [0,1),    \lb{6.22} \\
& u_{0,\alpha,\beta,\gamma}(x)/ \hatt u_{0,\alpha,\beta,\gamma}(x) 
\underset{x \downarrow 0}{=} 
\begin{cases} 
\Oh\big(x^{(2 + \alpha - \beta)\gamma}\big), & \gamma \in (0,1), \\
\Oh\big([\ln(1/x)]^{-1}\big), & \gamma = 0,
\end{cases}    \lb{6.23} 
\end{align}
in particular, $u_{0,\alpha,\beta,\gamma}(x) \big[x^{\beta} u_{0,\alpha,\beta,\gamma}'(x)\big] \underset{x \downarrow 0}{=} \Oh\big(u_{0,\alpha,\beta,\gamma}(x)/ \hatt u_{0,\alpha,\beta,\gamma}(x)\big)$ for $\gamma \in (0,1)$ (but not when $\gamma = 0$), a condition isolated in \cite{Hi74}, and that 
\begin{align}
\begin{split} 
& \hatt u_{0,\alpha,\beta,\gamma}(x) 
\big[x^{\beta} \hatt u_{0,\alpha,\beta,\gamma}'(x)\big] \underset{x \downarrow 0}{=} \begin{cases} 
\Oh\big(x^{- (2 - \alpha + \beta)\gamma}\big), & \gamma \in (0,1), \\
\Oh\big([\ln(x)]^2\big), & \gamma = 0,  \end{cases}    \\ 
& \hspace*{4.6cm} \text{does not exist}.    \lb{6.24} 
\end{split} 
\end{align}

Turning our attention to $x = \infty$, we now introduce (non-normalized) principal and nonprincipal solutions 
$u_{\infty,\a,\b,\gamma}(0, \dott)$ and $\hatt u_{\infty,\a,\b,\gamma}(0, \dott)$ of 
$\tau_{\alpha,\beta,\gamma} u = 0$ at $x=\infty$ by
\begin{align}
\begin{split} 
u_{\infty,\a,\b,\gamma}(0, x) &= x^{[1-\b-(2+\a-\b)\g]/2}, \quad \gamma \in [0,1),   \\
\hatt u_{\infty,\a,\b,\gamma}(0, x) &= \begin{cases} x^{[1-\b+(2+\a-\b)\g]/2}, & \gamma \in (0,1), \\
x^{(1-\b)/2} \ln(x), & \gamma =0,  \end{cases}\\
&\hspace*{5.5mm} \a>-1,\; \b<1,\; x \in (1, \infty),     \lb{6.25} 
\end{split} 
\end{align}
to obtain 
\begin{align}
& u_{\infty,\alpha,\beta,\gamma}(x) \big[x^{\beta} u_{\infty,\alpha,\beta,\gamma}'(x)\big] 
\underset{x \uparrow \infty}{=} 
\Oh\big(x^{-(2 + \alpha - \beta)\gamma}\big), \quad \gamma \in [0,1),     \lb{6.26} \\
& u_{\infty,\alpha,\beta,\gamma}(x)/ \hatt u_{\infty,\alpha,\beta,\gamma}(x) 
\underset{x \uparrow \infty}{=} 
\begin{cases} 
\Oh\big(x^{-(2 + \alpha - \beta)\gamma}\big), &\gamma \in (0,1), \\
\Oh\big([\ln(x)]^{-1}\big), & \gamma = 0,
\end{cases}     \lb{6.27} 
\end{align}
hence, one infers once again that 
\begin{equation}
u_{\infty,\alpha,\beta,\gamma}(x) \big[x^{\beta} u_{\infty,\alpha,\beta,\gamma}'(x)\big] 
\underset{x \uparrow \infty}{=} 
\Oh\big(u_{\infty,\alpha,\beta,\gamma}(x)/ \hatt u_{\infty,\alpha,\beta,\gamma}(x)\big) 
\, \text{ for $\gamma \in (0,1)$} 
\end{equation} 
(but not when $\gamma = 0$), and that 
\begin{align}
\begin{split} 
& \hatt u_{\infty,\alpha,\beta,\gamma}(x) 
\big[x^{\beta} \hatt u_{\infty,\alpha,\beta,\gamma}'(x)\big] \underset{x \uparrow \infty}{=} \begin{cases} 
\Oh\big(x^{ (2 - \alpha + \beta)\gamma}\big), & \gamma \in (0,1), \\
\Oh\big([\ln(x)]^2\big), & \gamma = 0,  \end{cases}    \\ 
& \hspace*{5cm} \text{does not exist}.     \lb{6.28} 
\end{split} 
\end{align}

Given \eqref{6.22}--\eqref{6.28}, \cite[Corollary~3]{Ro85} applies near $x=0$ as well as near $x = \infty$, and thus one obtains in addition to \eqref{6.19} that 
\begin{align}
& (T_{\alpha,\beta,\gamma, F} f)(x) = (\tau_{\alpha,\beta,\gamma} f)(x), \quad x \in (0,\infty),    \no \\
& f \in \dom(T_{\alpha,\beta,\gamma, F}) = \big\{g \in \dom(T_{\alpha,\beta,\gamma, max}) \, \big | \, 
g' \in L^2((0,\infty); x^{\beta} dx);       \lb{6.29} \\ 
& \hspace*{3.45cm} g \in L^2((0,\infty); x^{\beta - 2} dx)\big\}, \quad \alpha > -1, \; \beta < 1, \; \gamma \in (0,1).  \no
\end{align}
Here we used the Monotone Convergence Theorem to conclude that 
\begin{align}
\begin{split} 
& \infty > \lim_{c\downarrow 0, \, d \uparrow \infty} \int_c^d dx \, x^{\beta - 2} |g(x)|^2  
= \lim_{c\downarrow 0, \, d \uparrow \infty} \int_0^{\infty} dx \, x^{\beta - 2} \chi_{[c,d]}(x) |g(x)|^2  \\
& \quad = \int_0^{\infty} dx \, x^{\beta - 2} |g(x)|^2, \quad g \in \dom(T_{\alpha,\beta,\gamma, max}). 
\end{split} 
\end{align}

One notes that the characterization \eqref{6.29} cannot hold for $\gamma = 0$ by simply choosing $g$ to equal $u_{0,\alpha,\beta,0}$ near $x=0$ and $u_{\infty,\alpha,\beta,0}$ near $x=\infty$ (all integrands then are of the form $\Oh(1/x)$ near $x =0, \infty$). However, for $\gamma > 0$ one can improve \eqref{6.29} with the help of a weighted Hardy inequality as follows.

%%%%%%%
\begin{lemma} \lb{l6.2} $($Kalf and Walter \cite[Lemma~1\,(a)]{KW72}$)$.  
Suppose that 
\begin{align} 
\begin{split} 
& \text{$0 < p$~a.e.~on $(0,\infty)$, $p^{-1} \in L^1((0,c); dx)$ for all $c \in (0,\infty)$},    \\
& f \in AC_{loc}((0,\infty)), \quad f' \in L^2((0,\infty); p(x) dx), \quad \liminf_{x \downarrow 0} |f(x)| = 0. 
\end{split} 
\end{align} 
Then 
\begin{equation}
\lim_{x \downarrow 0} \f{|u(x)|^2}{\int_0^x dt \, p(t)^{-1}} = 0,
\end{equation}
and for all $R \in (0,\infty) \cup \{\infty\}$, 
\begin{equation}
\int_0^R dx \, p(x) |f'(x)|^2 \geq \f{1}{4} \int_0^R dx \, \f{|f(x)|^2}{p(x) \Big[\int_0^x dt \, p(t)^{-1}\Big]^2}.
\lb{6.34} 
\end{equation}
\end{lemma}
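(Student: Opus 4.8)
The plan is to establish \eqref{6.34} — which, under the substitution $s = P(x) := \int_0^x dt\, p(t)^{-1}$, is the classical one-dimensional Hardy inequality $\int |g'(s)|^2\,ds \ge \f{1}{4}\int s^{-2}|g(s)|^2\,ds$ for $g = f\circ P^{-1}$ — directly, by an integration by parts against the weight $p^{-1}P^{-2}$. The hypotheses on $p$ guarantee that $P$ is strictly increasing, satisfies $P(0^+)=0$, and is absolutely continuous on every compact subinterval of $[0,\infty)$ with $P'(x)=p(x)^{-1}$ a.e. By hypothesis $f' \in L^2((0,\infty);p(x)dx)$, so $M_R := \int_0^R dx\, p(x)|f'(x)|^2 < \infty$ for every $R$, and in particular $\int_0^x dt\, p(t)|f'(t)|^2 \to 0$ as $x\downarrow 0$.

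I would first establish the pointwise limit, i.e., the first assertion of the lemma. By the hypothesis $\liminf_{x\downarrow 0}|f(x)|=0$, choose $a_n \downarrow 0$ with $f(a_n)\to 0$; then for fixed $x>0$ and $n$ large, Cauchy--Schwarz applied after the splitting $f'(t)=\big(p(t)^{1/2}f'(t)\big)\big(p(t)^{-1/2}\big)$ gives
\begin{align*}
|f(x)| &\le |f(a_n)| + \bigg(\int_{a_n}^x dt\, p(t)|f'(t)|^2\bigg)^{1/2}\bigg(\int_{a_n}^x dt\, p(t)^{-1}\bigg)^{1/2} \\
&\le |f(a_n)| + \bigg(\int_0^x dt\, p(t)|f'(t)|^2\bigg)^{1/2} P(x)^{1/2}.
\end{align*}
Letting $n\to\infty$ yields $|f(x)|^2 \le P(x)\int_0^x dt\, p(t)|f'(t)|^2$, whence $|f(x)|^2/P(x)\to 0$ as $x\downarrow 0$.

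For the inequality itself, fix $0<\epsilon<R'\le R$ with $R'<\infty$. On $[\epsilon,R']$ one has $P\ge P(\epsilon)>0$, so $x\mapsto -1/P(x)$ is absolutely continuous there (a $C^1$ function of the absolutely continuous $P$) with derivative $1/\big(p(x)P(x)^2\big)$ a.e.; multiplying by the absolutely continuous function $|f|^2$, whose derivative is $2\Re\big(\ol{f}\,f'\big)$ a.e., and integrating by parts gives
\begin{align*}
\int_\epsilon^{R'} dx\, \f{|f(x)|^2}{p(x)P(x)^2} &= \f{|f(\epsilon)|^2}{P(\epsilon)} - \f{|f(R')|^2}{P(R')} + 2\int_\epsilon^{R'} dx\, \f{\Re\big(\ol{f(x)}\,f'(x)\big)}{P(x)} \\
&\le \f{|f(\epsilon)|^2}{P(\epsilon)} + 2\int_\epsilon^{R'} dx\, \f{|f(x)|\,|f'(x)|}{P(x)}.
\end{align*}
Set $I := \int_\epsilon^{R'} dx\, |f(x)|^2\big/\big(p(x)P(x)^2\big)$, which is finite since $|f|$ is bounded on $[\epsilon,R']$, $P\ge P(\epsilon)$, and $p^{-1}\in L^1((0,R'))$. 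Applying Cauchy--Schwarz to the last integral via the splitting $|f|\,|f'|/P=\big(|f|/(p^{1/2}P)\big)\big(p^{1/2}|f'|\big)$ bounds the right-hand side by $|f(\epsilon)|^2/P(\epsilon) + 2I^{1/2}M_R^{1/2}$, so that $I \le |f(\epsilon)|^2/P(\epsilon) + 2I^{1/2}M_R^{1/2}$; solving this quadratic inequality in $I^{1/2}$ gives $I^{1/2}\le 2M_R^{1/2} + \big(|f(\epsilon)|^2/P(\epsilon)\big)^{1/2}$, a bound independent of $R'$. Letting $R'\uparrow R$ and then $\epsilon\downarrow 0$ (using monotone convergence and the limit just established) yields $\int_0^R dx\, |f(x)|^2\big/\big(p(x)P(x)^2\big)\le 4M_R$, which is precisely \eqref{6.34}. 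The one genuinely delicate point is the boundary term $|f(\epsilon)|^2/P(\epsilon)$ arising in the integration by parts, which is exactly where the hypothesis $\liminf_{x\downarrow 0}|f(x)|=0$ is consumed; the remaining steps — the absolute-continuity facts underpinning the integration by parts, the two Cauchy--Schwarz estimates, and the monotone-convergence passages to the limit — are routine.
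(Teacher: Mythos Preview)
Your argument is correct. The paper itself does not supply a proof of Lemma~\ref{l6.2}; it merely quotes the result from Kalf and Walter \cite[Lemma~1\,(a)]{KW72} and applies it, so there is no in-paper proof to compare against. Your approach---first establishing $|f(x)|^2/P(x)\to 0$ via Cauchy--Schwarz on $f(x)-f(a_n)$ along a sequence $a_n\downarrow 0$ with $f(a_n)\to 0$, then integrating $|f|^2/(pP^2)$ by parts against the antiderivative $-1/P$ on $[\epsilon,R']$, bounding the cross term by Cauchy--Schwarz, and solving the resulting quadratic inequality in $I^{1/2}$---is precisely the standard route to weighted Hardy inequalities of this type and matches the spirit of the original Kalf--Walter proof. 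The only points worth a remark are cosmetic: the uniform bound $I^{1/2}\le 2M_R^{1/2}+\big(|f(\epsilon)|^2/P(\epsilon)\big)^{1/2}$ is what lets you pass to $R'\uparrow R$ (possibly $R=\infty$) before sending $\epsilon\downarrow 0$, and the latter limit uses exactly the first assertion; you have handled both correctly.
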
 
%%%%%%%

The generalized version of weighted Hardy inequalities, especially, its integral inequality version  
(replacing $f(x)$ by $\int_a^x dt \, F(t)$ or $\int_x^b dt \, F(t)$, and hence $f'(x)$ by $F(x)$, etc.) in
$L^{2}((a,b);dx)$ was established by Talenti \cite{Ta69} and Tomaselli \cite{To69} in 1969 and independently rediscovered by Chisholm and Everitt \cite{CE70/71} in 1971 (see also \cite{CEL99} for a more general result in the conjugate index case $1/p+1/q=1$, and \cite{GLMW18}, \cite{GLMP20}, and the references therein, 
for recent developments).  In addition, a 1972 paper by Muckenhoupt \cite{Mu72}
has further generalizations. For additional information on the fascinating history of this type of inequalities we refer to the excellent account in the \cite[Ch.~4, pp.~33--37]{KMP07}. 

An application of Lemma \ref{l6.2} to the example at hand with $p(x) = x^{\beta}$, $\beta < 1$, thus yields the special power weighted Hardy inequality
\begin{equation}
\int_0^R dx \, x^{\beta} |f'(x)|^2 \geq \f{(1-\beta)^2}{4} \int_0^{R} dx \, x^{\beta - 2} |f(x)|^2, 
\quad \beta < 1, \; R \in (0,\infty) \cup \{\infty\},    \lb{6.35}
\end{equation}
assuming 
\begin{equation}
f \in AC_{loc}((0,\infty)), \quad f' \in L^2\big((0,\infty); x^{\beta} dx\big), \quad \liminf_{x \downarrow 0} |f(x)| = 0. 
\lb{6.36} 
\end{equation}
Since $q$ is of the form, $q(x) = C_{\alpha,\beta,\gamma} x^{\beta - 2}$, a combination of \eqref{6.29} and \eqref{6.35}, \eqref{6.36} yields 
\begin{align}
& (T_{\alpha,\beta,\gamma, F} f)(x) = (\tau_{\alpha,\beta,\gamma} f)(x), \quad x \in (0,\infty),    \no \\
& f \in \dom(T_{\alpha,\beta,\gamma, F}) = \big\{g \in \dom(T_{\alpha,\beta,\gamma, max}) \, \big | \, 
g' \in L^2((0,\infty); x^{\beta}dx);       \lb{6.37} \\ 
& \hspace*{3.4cm} \liminf_{x \downarrow 0} |g(x)| = 0\big\}, \quad 
\alpha > -1, \; \beta < 1, \; \gamma \in (0,1).    \no
\end{align}

Finally, combining Lemma \ref{l5.5} and \eqref{6.37} yields 
\begin{align}
& \int_0^{\infty} dx \, \Big[x^{\beta} |f'(x)|^2 + 4^{-1}\big[(2 + \alpha - \beta)^2 \gamma^2 - (1-\beta)^2\big] 
x^{\beta - 2} |f(x)|^2\Big]    \no \\
& \quad \leq \bigg(\int_0^{\infty} x^{\alpha}dx \, |f(x)|^2\bigg)^{1/2}
\bigg(\int_0^{\infty} x^{\alpha}dx \, |(\tau_{\alpha,\beta,\gamma} f)(x)|^2\bigg)^{1/2},   \lb{6.38} \\
& \hspace*{2.2cm}  f \in \dom(T_{\alpha,\beta,\gamma,F}), \;  \alpha > -1, \; \beta < 1, \; \gamma \in (0,1),    \no  
\end{align}
extending the special case $\alpha=\beta=0$ treated at the end of \cite{EK07}.

Judging from Theorem \ref{t5.2} in the regular context at $x=a$, one might naively guess at first that an inequality of the kind of \eqref{6.38} might extend to other self-adjoint extensions of 
$T_{\alpha,\beta,\gamma,min}$ (perhaps, even to $T_{\alpha,\beta,\gamma,max}$). In the remainder of this section we will prove that this fails and that \eqref{6.38} cannot extend to any self-adjoint extension of 
$T_{\alpha,\beta,\gamma, min}$ other than the Friedrichs extension $T_{\alpha,\beta,\gamma,F}$. 

We start by noting (cf.\ \cite[Sects.~3, 4]{GLN20}) all self-adjoint extensions of $T_{\alpha,\beta,\gamma,min}$
are of the form,
\begin{align}
\begin{split} 
& (T_{\alpha,\beta,\gamma,\delta} f)(x) = (\tau_{\alpha,\beta,\gamma} f)(x), \quad x \in (0,\infty), \; 
\delta \in [0,\pi),    \\
& \, f \in \dom(T_{\alpha,\beta,\gamma,\delta}) = \big\{g \in \dom(T_{\alpha,\beta,\gamma,max}) \, \big | \, 
\sin(\delta) \wti g^{\, \prime} (0) + \cos(\delta) \wti g (0) = 0\big\},   \lb{6.39} \\
\end{split} 
\end{align}
and hence the Friedrichs extension $T_{\alpha,\beta,\gamma,F}$ of $T_{\alpha,\beta,\gamma,min}$ corresponds to the case $\delta=0$. In order to describe the resolvent of $T_{\alpha,\beta,\gamma,\delta}$ we need a few preparations. Introducing,
\begin{align}
& \phi_{\alpha,\beta,\gamma,\delta}(z,x) = \cos(\delta) \phi_{\alpha,\beta,\gamma,0}(z,x) - \sin(\delta) 
\theta_{\alpha,\beta,\gamma,0}(z,x),    \no \\
& \theta_{\alpha,\beta,\gamma,\delta}(z,x) = \sin(\delta) \phi_{\alpha,\beta,\gamma,0}(z,x) + \cos(\delta) 
\theta_{\alpha,\beta,\gamma,0}(z,x),    \lb{6.40} \\
& \hspace*{4.1cm} z \in \bbC, \; x \in (0, \infty), \; \delta \in [0,\pi),    \no 
\end{align}
observing
\begin{equation}
W(\theta_{\alpha,\beta,\gamma,\delta}(z,\dott), \phi_{\alpha,\beta,\gamma,\delta}(z,\dott)) = 1, \quad 
z \in \bbC,
\end{equation}
one considers the Weyl--Titchmarsh solutions 
\begin{align}
& \psi_{\a,\b,\g,\delta}(z,\dott)=\theta_{\a,\b,\gamma,\delta}(z,\dott) 
+ m_{\a,\b,\g,\delta}(z)\phi_{\a,\b,\gamma,\delta}(z,\dott)\in L^2((0,\infty);x^\a dx),    \no \\
& \hspace*{8cm} z \in \bbC \backslash \bbR, \; \; \delta \in [0,\pi),
\end{align}
where
\begin{equation}
m_{\a,\b,\g,\delta}(z) = \f{- \sin(\delta) + \cos(\delta) m_{\a,\b,\g,0}(z)}{\cos(\delta) 
+ \sin(\delta) m_{\a,\b,\g,0}(z)}, \quad z \in \bbC \backslash \bbR, \; \delta \in [0,\pi). 
\end{equation}
Since $\tau_{\alpha,\beta,\gamma}$ is in the limit point case at $\infty$, one concludes that
\begin{equation}
\psi_{\a,\b,\g,\delta}(z,\dott) = C_{\a,\b,\g,\delta}(z) \psi_{\a,\b,\g,0}(z,\dott) 
\end{equation}
for some constant $C_{\a,\b,\g,\delta}(z) \in \bbC \backslash \{0\}$. 

One then obtains for the Green's function $G_{\alpha,\beta,\gamma,\delta} (z, \dott,\dott)$ of 
$T_{\alpha,\beta,\gamma,\delta}$ (i.e., the integral kernel of the resolvent 
$(T_{\alpha,\beta,\gamma,\delta} - z I_{L^2((0,\infty); x^{\alpha} dx)})^{-1}$), 
\begin{equation}
G_{\alpha,\beta,\gamma,\delta} (z, x, x') = \begin{cases}
\phi_{\alpha,\beta,\gamma,\delta}(z,x) \psi_{\a,\b,\g,\delta}(z,x'), & 0 < x \leq x' < \infty, \\
\phi_{\alpha,\beta,\gamma,\delta}(z,x') \psi_{\a,\b,\g,\delta}(z,x), & 0 < x' \leq x < \infty,
\end{cases} \quad z \in \bbC \backslash \bbR.      \lb{6.44} 
\end{equation}

Next, consider $g \in L^2((0,\infty); x^{\alpha} dx)$, with $\supp\,(g) \subset (0,\infty)$ compact (i.e., the support of $g$ is away from $0$ and from $\infty$), then 
\begin{equation}
f(z,\delta,\dott) = (T_{\alpha,\beta,\gamma,\delta} - z I_{L^2((0,\infty); x^{\alpha} dx)})^{-1} g \in 
\dom(T_{\alpha,\beta,\gamma,\delta}), \quad z \in \bbC \backslash \bbR,  
\end{equation} 
and the set $\cD_{\alpha,\beta,\gamma,\delta,z}$ of such $f(z,\delta,\dott)$ forms an operator core of  
$T_{\alpha,\beta,\gamma,\delta}$, that is, 
$\ol{T_{\alpha,\beta,\gamma,\delta}\big|_{\cD_{\alpha,\beta,\gamma,\delta,z}}} = T_{\alpha,\beta,\gamma,\delta}$ (since $T_{\alpha,\beta,\gamma,min}$ is symmetric and the set of 
$g \in L^2((0,\infty); x^{\alpha} dx)$ with $\supp(g) \subset (0,\infty)$ compact is dense in 
$ L^2((0,\infty); x^{\alpha} dx)$, see \cite[Corollary, p.~257]{RS75}). 

One then obtains from \eqref{6.44}, 
\begin{align}
f(z,\delta,x) &=  \psi_{\a,\b,\g,\delta}(z,x) \int_0^x (x')^{\alpha} dx' \, \phi_{\alpha,\beta,\gamma,\delta}(z,x') g(x') 
\no \\
& \quad + \phi_{\alpha,\beta,\gamma,\delta}(z,x) \int_x^{\infty} (x')^{\alpha} dx' \, \psi_{\alpha,\beta,\gamma,\delta}(z,x') g(x')   \no \\
&= \phi_{\alpha,\beta,\gamma,\delta}(z,x) \int_0^{\infty} (x')^{\alpha} dx' \, \psi_{\alpha,\beta,\gamma,\delta}(z,x') g(x'), \quad z \in \bbC \backslash \bbR,  
\end{align}
for $0 < x < \inf(\supp\,(g))$. Changing $z \in \bbC \backslash \bbR$ a bit if necessary, we may assume without loss of generality that 
\begin{equation}
\int_0^{\infty} (x')^{\alpha} dx' \, \psi_{\alpha,\beta,\gamma,\delta}(z,x') g(x') \neq 0. 
\end{equation}
Thus, for some constant $c_{\alpha,\beta,\gamma}(z) \in \bbC\backslash\{0\}$, 
\begin{equation}
\phi_{\alpha,\beta,\gamma,\delta}(z,x) \underset{x \downarrow 0}{=} \sin(\delta) c_{\alpha,\beta,\gamma}(z)  
x^{[(1-\beta) - (2 + \alpha - \beta)\gamma]/2}, \quad \delta \in (0,\pi), 
\end{equation}
and hence for all $\alpha > -1$, $\beta < 1$, $\gamma \in (0,1)$, and $\delta \in (0,\pi)$,
\begin{equation}
\lim_{\varepsilon \downarrow 0} \int_{\varepsilon}^{\infty} dx \, x^{\beta} |f'(z,\delta,x)|^2 =\infty,\quad  \lim_{\varepsilon \downarrow 0} \int_{\varepsilon}^{\infty} dx \, x^{\beta - 2} |f(z,\delta,x)|^2 = \infty,
\end{equation}
provided $(2+\alpha-\beta)\gamma-(1-\beta)\neq 0$.  Thus, as long as $q\not \equiv 0$, no cancellation in the analog of the left-hand side of \eqref{6.38}, namely, 
\begin{equation}
\int_0^{\infty} dx \, \Big[x^{\beta} |f'(z,\delta,x)|^2 + 4^{-1}\big[(2 + \alpha - \beta)^2 \gamma^2 - (1-\beta)^2\big] 
x^{\beta - 2} |f(z,\delta,x)|^2\Big], 
\end{equation}
can possibly occur. Hence, for any $\delta \in (0,\pi)$ (i.e., for all self-adjoint extensions of  
$T_{\alpha,\beta,\gamma,min}$ other than the Friedrichs extension $T_{\alpha,\beta,\gamma,F}$), there exists a core $\cD_{\alpha,\beta,\gamma,\delta,z}$ for $T_{\alpha,\beta,\gamma,\delta}$ such that the analog of the
 left-hand side of \eqref{6.38} diverges, rendering an analog of Theorem \ref{t5.2} for 
 $T_{\alpha,\beta,\gamma,min}$ moot.

Apart from \cite{EZ78} in the special case $\gamma = (1-\beta)/(2+\alpha-\beta)$, implying $q \equiv 0$ and 
$\tau_{\alpha,\beta, (1-\beta)/(2+\alpha-\beta)}$ regular at $x=0$, we  did not find a treatment of the example 
in this section in the literature.  

\medskip 
%%%%%%%%%%%%%%%%%%%%%%%%%%%%%%%%%%%%%
\noindent 
{\bf Acknowledgments.} We are indebted to Man Kam Kwong, Lance Littlejohn, and Jonathan Partington for helpful discussions. 
%%%%%%%%%%%%%%%%%%%%%%%%%%%%%%%%%%%%%

\medskip

\noindent 
{\bf Data Availability Statement.} 
Data sharing is not applicable to this article as no datasets were generated or
analyzed during the current study.

\medskip

%%%%%%%%%%%%%%%%%%%%%%%%%%%%%%%%
%%%%%%%%%%%%%%%%%%%%%%%%%%%%%%%%
 
%
\end{document}